\numberwithin{equation}{section}
\title{\texorpdfstring{Robust A Posteriori Error 
Estimation for Finite Element Approximation to 
$\boldsymbol{H}(\mathbf{curl})$ Problem }
{Robust A Posteriori Error 
Estimation for Finite Element Approximation to H(curl) Problem}\thanks{
This work performed under the auspices of the U.S. Department of Energy by 
Lawrence Livermore National Laboratory under Contract DE-AC52-07NA27344 
(LLNL-JRNL-645325). 
This work was supported in part by the National Science Foundation
under grants DMS-1217081, DMS-1320608, DMS-1418934, and DMS-1522707.}}
\author{Zhiqiang Cai\thanks{
Department of Mathematics, Purdue University, 150 N. University
Street, West Lafayette, IN 47907-2067, zcai@math.purdue.edu.}
\and Shuhao Cao\thanks{
Department of Mathematics, Pennsylvania State University, University Park, State 
College, PA 16802, scao@psu.edu.}
\and Rob Falgout\thanks{Center for Applied Scientific Computing, 
Lawrence Livermore National Laboratory, 
Livermore, CA 94551-0808, falgout2@llnl.gov.}}
\date{}
\begin{document}
\maketitle

\newcommand{\vn}{\bm{n}}
\newcommand{\vf}{\bm{f}}
\newcommand{\vj}{\bm{j}}
\newcommand{\vu}{\bm{u}}
\newcommand{\vv}{\bm{v}}
\newcommand{\vw}{\bm{w}}
\newcommand{\vp}{\bm{p}}
\newcommand{\vg}{\bm{g}}
\newcommand{\vr}{\bm{r}}
\newcommand{\vs}{\bm{s}}
\newcommand{\vt}{\bm{t}}
\newcommand{\vx}{\bm{x}}
\newcommand{\ve}{\bm{e}}
\newcommand{\vz}{\bm{z}}

\newcommand{\vH}{\bm{H}}
\newcommand{\vE}{\bm{E}}
\newcommand{\vX}{\bm{X}}
\newcommand{\vL}{\bm{L}}
\newcommand{\vP}{\bm{P}}
\newcommand{\vU}{\bm{U}}
\newcommand{\RR}{\mathbb{R}}
\newcommand{\fI}{\mathfrak{I}}
\newcommand{\sP}{\mathscr{P}}
\newcommand{\fA}{\mathfrak{A}}

\def\scT{{_\cT}}
\def\sD{{_D}}
\def\sF{{_F}}
\def\sK{{_K}}
\def\sI{{_I}}
\def\sb{{_b}}
\def\sN{{_N}}
\newcommand{\cD}{{\mathcal{D}}}
\newcommand{\cF}{{\mathcal{F}}}
\newcommand{\cI}{{\mathcal{I}}}
\newcommand{\cJ}{{\mathcal{J}}}
\newcommand{\cL}{{\mathcal{L}}}
\newcommand{\cN}{{\mathcal{N}}}
\newcommand{\cT}{{\mathcal{T}}}

\newcommand{\W}{{\mathbf W}}
\newcommand{\cA}{{\mathcal A}}
\newcommand{\cB}{\mathcal{B}}
\newcommand{\cE}{\mathcal{E}}
\newcommand{\cM}{\mathcal{M}}
\newcommand{\cP}{\mathcal{P}}
\newcommand{\cR}{\mathcal{R}}
\newcommand{\cS}{\mathcal{S}}
\newcommand{\cW}{\mathcal{W}}
\newcommand{\cH}{\mathcal{H}}

\renewcommand{\a}{\alpha}
\renewcommand{\b}{\beta}
\renewcommand{\d}{\delta}
\newcommand{\D}{\Delta}
\newcommand{\s}{\sigma}
\newcommand{\lam}{\lambda}
\newcommand{\Om}{\Omega}
\newcommand{\om}{\omega}
\def\bvphi{{\bm{\varphi}}}
\newcommand{\btau}{\bm{\tau}}
\newcommand{\bsig}{\bm{\sigma}}
\newcommand{\bxi}{\bm{\xi}}
\newcommand{\btheta}{\bm{\theta}}
\newcommand{\bz}{\bm{\zeta}}
\newcommand{\bgam}{\bm{\gamma}}
\newcommand{\sd}{\bsig^{\Delta}}
\newcommand{\C}{\rm I\kern-.5emC}
\newcommand{\R}{\rm I\kern-.19emR}
\newcommand{\G}{\Gamma}
\newcommand{\g}{\gamma}
\newcommand{\bphit}{\bm{\phi}_{\top}}
\newcommand{\wtauz}{\wt{\btau}^{\D}_{\omz}}
\newcommand{\wtsigz}{\wt{\bsig}^{\D}_{\omz}}
\newcommand{\subcT}{\raisebox{-0.5pt}{$\scriptscriptstyle{\cT}$}}
\newcommand{\bsigtz}{\bsig^{\D}_{\vz,\subcT}}
\newcommand{\omz}{\omega_{\bm{z}}}

\newcommand{\argmin}{\mathrm{argmin}}
\newcommand{\wt}{\widetilde}
\newcommand{\ol}{\overline}

\newcommand{\dint}{\displaystyle\int}
\newcommand{\binprod}[2]{\bigl( {#1},\,{#2} \bigr)}
\newcommand{\sprod}{\mathop{\mathchoice  
{\textstyle\prod}  {\prod} {\prod} {\prod} }\nolimits}
\newcommand{\vsprod}{\mathop{\mathchoice  
{\textstyle\bm{\prod}}  {\bm{\prod}} {\bm{\prod}} {\bm{\prod}} }\nolimits}

\newcommand{\bzero}{{\bf 0}}
\newcommand{\p}{\partial}
\newcommand{\at}[1]{\big\vert_{\raisebox{-0.5pt}{\scriptsize$#1$}} }
\newcommand{\cross}{\!\times\!}
\newcommand{\curlt}{\nabla \cross}
\newcommand{\curl}{\mathbf{curl}\hspace{0.7pt}}
\newcommand{\curll}{\operatorname{curl}}
\renewcommand{\div}{\operatorname{div}}
\newcommand{\divv}{\nabla\!\cdot\!}
\newcommand{\dualp}[2]{{\left\langle {#1},{#2} \right\rangle}}
\newcommand{\ljump}{\lbrack\hspace{-1.5pt}\lbrack}
\newcommand{\rjump}{\rbrack\hspace{-1.5pt}\rbrack}
\newcommand{\jump}[2]{\ljump {#1} 
\rjump_{\raisebox{-2pt}{\scriptsize$#2$}} }
\newcommand{\norm}[1]{\left\|{#1}\right\|}
\makeatletter
\newcommand{\enorm}{\@ifstar\@enorms\@enorm}
\newcommand{\@enorms}[1]{%
\left|\mkern-2.5mu\left|\mkern-2.5mu\left|
#1
\right|\mkern-2.5mu\right|\mkern-2.5mu\right|
}
\newcommand{\@enorm}[2][]{%
\mathopen{#1|\mkern-2.5mu#1|\mkern-2.5mu#1|}
#2
\mathclose{#1|\mkern-2.5mu#1|\mkern-2.5mu#1|}
}
\makeatother

\newcommand{\parallelslant}{\mathop{\mathchoice  
{\textstyle\mathbin{\!/\mkern-5mu/\!}}
{\scriptstyle\mathbin{\!/\mkern-5mu/\!}} 
{\scriptscriptstyle\mathbin{\!/\mkern-5mu/\!}}
{\scriptscriptstyle\mathbin{\!/\mkern-5mu/\!}}}\nolimits}

\newcommand{\Lt}{L^2(\Omega)}
\newcommand{\vLt}{\vL^2(\Omega)}
\newcommand{\vhdiv}{\vH(\div)}
\newcommand{\vHdiv}{\vH(\div;\Omega)}
\newcommand{\vHcrl}{\vH(\curl;\Omega)}
\newcommand{\vhcrl}{\vH(\curl)}
\newcommand{\oversett}[2]{%
\mathop{#2}\limits^{\vbox to -.1ex{\kern -0.5ex\hbox{$\scriptstyle #1$}\vss}}}
\newcommand{\vHcrlbz}[1]{{\oversett{\circ}{{}\vH}}_{#1}(\curl;\Omega)}
\newcommand{\ND}{\bm{\cN\!\cD}}
\newcommand{\vHcrlb}[1]{\vH_{#1}(\curl;\Omega)}
\newcommand{\Hmh}{H^{-1/2}(\Gamma)}
\newcommand{\Hmhb}[1]{H^{-1/2}(#1)}
\newcommand{\vHo}{\vH^1(\Omega)}
\newcommand{\vXbt}[1]{\vX_{\parallelslant}(#1)}
\newcommand{\vXbp}[1]{\vX_{\perp}(#1)}
\newcommand{\vHhbt}[1]{\vH^{1/2}_{\parallelslant}(#1)}
\newcommand{\vHhbp}[1]{\vH^{1/2}_{\perp}(#1)}
\newcommand{\vHmhbt}[1]{\vH^{-1/2}_{\parallelslant}(#1)}
\newcommand{\vHmhbp}[1]{\vH^{-1/2}_{\perp}(#1)}
\newcommand{\vHmhdiv}[1]{\vH^{-1/2}_{\parallelslant}(\div_{#1}, #1)}
\newcommand{\vHmhcurl}[1]{\vH^{-1/2}_{\perp}(\curll_{#1}, #1)}
\newcommand{\vHhb}[1]{\vH^{1/2}(#1)}
\newcommand{\vHdivw}[1]{\vH(\div;\Omega,#1)}
\newcommand{\vHdivwz}[1]{\vH(\div 0\,;\Omega,#1)}
\newcommand{\vPH}[1]{{P\!\vH}^{#1}(\Om,\sP)}

\newcommand{\BDM}{\bm{\cB\!\cD\!\cM}}
\newcommand{\RT}{\bm{\cR\!\cT}}

\newtheorem{theorem}{Theorem}[section]
\numberwithin{theorem}{section}
\newtheorem{assumption}[theorem]{Assumption}

\newtheorem{remark}[theorem]{Remark}
\theoremstyle{definition}
\newtheorem{definition}[theorem]{Definition}
\newtheorem{lemma}[theorem]{Lemma}
\newtheorem{prop}[theorem]{Proposition}

\begin{abstract}
In this paper, we introduce a novel a posteriori error estimator for the 
conforming finite element approximation to the $\vhcrl$ problem with 
inhomogeneous media and with the right-hand side only in $\vL^2$. The estimator 
is of the recovery type. Independent with the current approximation to the primary 
variable (the electric field), an auxiliary variable (the magnetizing field) is 
recovered in parallel by solving a similar $\vhcrl$ problem. An alternate way of 
recovery is presented as well by localizing the error flux. The estimator is 
then defined as the sum of the modified element residual and the residual of the 
constitutive equation defining the auxiliary variable. It is proved that the 
estimator is approximately equal to the true error in the energy norm without the 
quasi-monotonicity assumption. Finally, we present numerical results for several 
$\vhcrl$ interface problems.
\end{abstract}

\section{Introduction}

Let $\Om$ be a bounded and
simply-connected polyhedral domain in $\RR^3$ 
with boundary $\partial\Omega=\bar{\Gamma}_D\cup\bar{\Gamma}_N$ and 
$\Gamma_D\cap \Gamma_N=\emptyset$, and let $\vn=(n_1,n_2, n_3)$ be the outward unit
vector normal to the boundary. Denote by $\vu$ the electric field, we 
consider the following $\vhcrl$ model problem originated from a second order 
hyperbolic equation by eliminating the magnetic field in Maxwell's equations:
\begin{equation}
\label{eq:pb-ef}
\left\{
\begin{array}{rcll}
\curlt (\mu^{-1}  \curlt \vu) + \b\, \vu &= &\vf,  
&\; \text{ in }\, \Om,
\\[2mm]
\vu \cross \vn &=&\vg_{_D},  &\; \text{ on }\, \G_D,
\\[2mm]
(\mu^{-1} \curlt \vu) \cross \vn & = &\vg_{_N}, 
&\; \text{ on }\, \G_N,
\end{array} 
\right.
\end{equation}
where $\curlt$ is the curl operator;
the $\vf$, $\vg_{_D}$, and $\vg_{_N}$ are given vector fields which are assumed to 
be well-defined on $\Om$, $\G_D$, and $\G_N$, respectively; 
the $\mu$ is the magnetic permeability;
and the $\beta$ depends on the electrical conductivity, the dielectric constant, and 
the time step size.
Assume that the coefficients $\mu^{-1}\in \vL^\infty(\Omega)$ and 
$\b \in \vL^\infty(\Omega)$ are bounded below 
\[
 0<\mu_0^{-1}\leq \mu^{-1} (\vx)
 \quad\mbox{and}\quad
0<\b_0\leq \b (\vx)
\]
for almost all $\vx\in \Omega$.

\smallskip

The \emph{a posteriori} error estimation for the conforming finite element 
approximation to the $\vhcrl$ problem 
in~\eqref{eq:pb-ef} has been studied 
recently by several researchers. Several types of \emph{a posteriori} error 
estimators have been introduced and 
analyzed. These include residual-based estimators and the corresponding 
convergence analysis (explicit 
\cite{Hiptmair2000,Zou-1,Zou-2,Chen10,Nicaise03, 
Nicaise07,Schoberl07}, and 
implicit \cite{Harutyunyan08}), equilibrated estimators \cite{Braess06}, 
and recovery-based estimators \cite{Cai15,Nicaise05}.
There are four types of errors in the explicit 
residual-based estimator (see \cite{Hiptmair2000}). 
Two of them are standard, i.e., the element residual, and the interelement face 
jump induced by the discrepancy induced by integration by parts associated with 
the original equation in \eqref{eq:pb-ef}. The other two are also the element 
residual and the interelement face jump, but associated with the divergence of 
the original equation: $\divv (\b \vu) =\divv \vf$, where $\divv$ is the 
divergence operator. These two quantities measure how good the approximation is in 
the kernel space of the curl operator.

\smallskip

Recently, the idea of the robust recovery estimator explored in \cite{Cai09, 
Cai10} for the diffusion interface problem has been extended to the $\vhcrl$ 
interface problem in \cite{Cai15}. Instead of recovering two quantities in the 
continuous polynomial spaces like the extension of the popular Zienkiewicz-Zhu (ZZ) 
error estimator in \cite{Nicaise05},  two quantities related to $\mu^{-1}\curlt \vu$ 
and $\b \vu$ are recovered in the respective $\vhcrl$- and $\vhdiv$-conforming 
finite element spaces. The resulting estimator consists of four terms similar 
to the residual estimator in the pioneering work \cite{Hiptmair2000} 
on this topic by Beck, Hiptmair, Hoppe, and Wohlmuth: two of them 
measure the face jumps of the tangential components and the normal component of 
the numerical approximations to $\mu^{-1}\curlt \vu$ and $\b \vu$, 
respectively, and the other two are element residuals of the recovery type.

\smallskip

All existing a posteriori error estimators for the $\vhcrl$ problem assume that the 
right-hand side $\vf$ is in $\vhdiv$ or divergence free. This assumption does not 
hold in many applications (e.g. the implicit marching scheme mentioned in 
\cite{Hiptmair98}). Moreover, two terms of the estimators are 
associated with the divergence of the original equation. In the proof, these 
two terms come to existence up after performing the integration by parts for 
the irrotational gradient part of the error, which lies in the kernel of the curl 
operator. One of the key technical tools, a Helmholtz decomposition, used in this 
proving mechanism, relies on $\vf$ being in $\vhdiv$, and fails 
if $\vf \notin \vhdiv$. In \cite{Chen10}, the assumption that $\vf\in \vhdiv$ is 
weakened to $\vf$ being in the piecewise $\vhdiv$ space with respect to the 
triangulation, at the same time, the divergence residual and norm jump are modified 
to incorporate this relaxation. 
Another drawback of using Helmholtz decomposition on the 
error is that it introduces the assumption of the coefficients' quasi-monotonicity 
into the proof pipeline. An interpolant with a coefficient independent stability 
bound is impossible to construct in a ``checkerboard'' scenario (see 
\cite{Petzoldt02} for diffusion case, and \cite{Cai15} for $\vhcrl$ case). To gain 
certain robustness for the error estimator in the proof, one has to assume the 
coefficients distribution is quasi-monotone. However, 
in an earlier work of Chen, Xu, and Zou (\cite{Zou-2}), it is shown that numerically 
this quasi-monotonicy assumption is more of an artifact introduced by the proof 
pipeline, at least for the irrotational vector fields. As a result, we conjecture 
that the divergence related terms should not be part of an estimator if it is 
appropriately constructed. In Section \ref{sec:numex}, some numerical justifications 
are presented to show the unnecessity of including the divergence related terms.
\smallskip

The pioneering work in using the dual problems for a posteriori 
error estimation dates back to \cite{Oden89}. In \cite{Oden89}, Oden, Demkowicz, 
Rachowicz, and Westermann studied the a posteriori error estimation through duality 
for the diffusion-reaction problem. The 
finite element approximation to a dual problem is used to estimate the error for the 
original primal problem (diffusion-reaction). The result shares the same form to the 
Prague-Synge identity (\cite{Prager1947}) for diffusion-reaction problem. The 
method presented in this paper may be viewed as an extension of the duality
method in \cite{Oden89} to the $\vhcrl$ interface problem. The auxiliary magnetizing 
field introduced in Section \ref{sec:aux} is the dual variable resembling the flux 
variable in \cite{Oden89}. The connection is illustrated in details in Section 
\ref{sec:dual}.

Later, Repin (\cite{Repin2007}) proposes a functional type 
a posteriori error estimator of $\vhcrl$ problem, which can be viewed as an 
extension of the general approach in \cite{Oden89}. 
Repin et al (\cite{Neittaanmaki2010}) improve the estimate by assuming 
that the data $\vf$ is divergence free and the finite element approximation is in 
$\vhdiv$. In \cite{Repin2007}, the upper bound is 
established through integration by parts by introducing an auxiliary variable in an 
integral identity for $\vhcrl$. An auxiliary 
variable is recovered by globally solving an $\vhcrl$ finite element approximation 
problem and is used in the error estimator. For the global lower bound, the error 
equation is solved globally in an $\vhcrl$ conforming finite element space. Then the 
solution is inserted into the functional as the error estimator of which the 
maximizer corresponds to the solution to the error equation.
\smallskip

The purpose of this paper is to develop a novel a posteriori error estimator for 
the conforming finite element approximation to the $\vhcrl$ problem in 
(\ref{eq:pb-ef-weak}) that overcomes the above drawbacks of the existing estimators, 
e.g. the Helmholtz decomposition proof mechanism, restricted by the assumption that 
$\vf\in \vHdiv$ or divergence free, which brings in the divergence related 
terms. Specifically, the estimator studied in this paper is of the recovery type, 
requires the right-hand side merely having a regularity of $\vL^2$, and has only two 
terms that measure the element residual and the tangential face jump of the original 
equation. Based on the current approximation to the primary variable $\vu$ (the 
electric field), an auxiliary variable $\bsig$ (the magnetizing field) 
is recovered by approximating a similar auxiliary $\vhcrl$ problem. To 
this end, a multigrid smoother is used to approximate this auxiliary problem, which 
is independent of the primary equation and is performed in parallel with the primary 
problem. The cost is the same order of complexity with computing the 
residual-based estimator, which is much less than solving the original $\vhcrl$ 
problem. 

An alternate route is illustrated as well in Section \ref{sec:loc} by approximating 
a localized auxiliary problem.  
While embracing the locality, the parallel nature using the multigrid smoother is 
gone. The recovery through 
approximating localized problem requires the user to 
provide element residual and tangential face jump of the numerical magnetizing 
field based on the finite element solution of the primary equation.
The estimator is then defined as the sum of the modified element residual and 
the residual of the auxiliary constitutive equation. It is proved that the 
estimator is equal to the true error in the energy norm globally. Moreover, in 
contrast to the mechanism of the proof using Helmholtz decomposition mentioned 
previously, the decomposition is avoided by using the joint energy norm. As 
a result, the new estimator's reliability does not rely on the coefficients 
distribution (Theorem \ref{th:rel}). 

Meanwhile, in this paper, the method and analysis 
extend the functional-type error estimator in \cite{Repin2007} to a 
more pragmatic context by including the mixed boundary conditions, and furthermore, 
the auxiliary variable $\bsig$ is approximated by a fast multigrid smoother, or by 
solving a localized $\vhcrl$ problem on vertex patches, to avoid solving a global 
finite element approximation problem.

Lastly, in order to compare the new estimator 
introduced in this paper with existing estimators, we present numerical results for  
$\vhcrl$ intersecting interface problems. When $\vf \notin \vhdiv$, the mesh 
generated by our indicator is much more efficient than those by existing indicators 
(Section \ref{sec:numex}). 

\section{Primal Problem and The Finite Element Approximation}

Denote by $\vLt$ the space of the square integrable vector fields in  $\RR^3$ 
equipped with the
standard $\vL^2$ norm: $\norm{\vv}_{\om}=\sqrt{(\vv,\,\vv)_{\om}}$, 
where $(\vu,\,\vv)_{\om}:=\dint_{\om} \vu\cdot\vv\,d\vx$ denotes the standard 
$\vL^2$ inner product over an open subset $\om\subseteq \Omega$, when $\om = 
\Omega$, the subscript is dropped for $\norm{\vv}:= \norm{\vv}_{\Omega}$ and 
$(\vu,\,\vv)=(\vu,\,\vv)_{\Omega}$. Let 
\[
\vHcrl := \{\vv\in \vLt: \curlt \vv \in \vLt \},
\]
which is a Hilbert space equipped with the norm
\[
\norm{\vv}_{\vhcrl}=\Bigl(\norm{\vv}^2+\norm{\curlt\vv}^2\Bigr)^{1/2}.
\]
Denote its subspaces by 
\[\begin{array}{l}
\vHcrlb{B}
:= \{\vv\in \vHcrl: \vv \cross \vn = \vg_{_B} \text{ on } \G_{B} \} \\[2mm]
\mbox{and} \quad
\vHcrlbz{B}
:= \{\vv\in \vHcrlb{B}: \vg_{_B}=\bzero 
\}
\end{array}
\]
for $B=D$ or $N$.

For any $\vv\in \vHcrlbz{D}$, multiplying the first equation in \eqref{eq:pb-ef} by 
a suitable test function $\vv$ with vanishing tangential part on $\G_D$, integrating 
over the domain $\Omega$, and using integration by parts formula for 
$\vhcrl$-regular vector fields (e.g. see 
\cite{Buffa-Ciarlet}), we have 
\begin{eqnarray*}
(\vf,\,\vv)
&=& \big(\curlt (\mu^{-1}  \curlt \vu),\,\vv\big)+ ( \b\,\vu,\,\vv)
\\[2mm]
&=& (\mu^{-1}\curlt\vu,\,\curlt\vv)+ (\b\,\vu,\,\vv)
- \int_{\Gamma_{N}}\vg_{_N} \cdot \vv\,dS.
\end{eqnarray*}
Then the weak form associated to problem~\eqref{eq:pb-ef} is 
to find $\vu\in  \vH_D(\curl;\Omega)$ such that 
\begin{equation}
\label{eq:pb-ef-weak}
A_{\mu,\beta}(\vu,\vv)  = f_{_N}(\vv),
 \quad \forall\;  \vv \in \vHcrlbz{D},
\end{equation}
where the bilinear and linear forms are given by 
\[
\begin{gathered}
A_{\mu,\beta}(\vu,\vv) = (\mu^{-1} \curlt \vu,\curlt\vv ) + (\b\, \vu,\vv)
\quad\mbox{and}\quad
f_{_N}(\vv)=(\vf,\vv) + \dualp{\vg_{_N}}{\vv}_{\G_N},
\end{gathered}
\]
respectively. Here, 
$\dualp{\vg_{_N}}{\vv}_{\G_N}=\dint_{\G_N} \vg_{_N}\cdot \vv\,dS$ 
denotes the duality pair over $\G_N$. Denote by
\[
\enorm{\vv}_{\mu,\beta} 
=\sqrt{A_{\mu,\beta}(\vv,\vv)}
\]
the ``energy'' norm induced by the bilinear form 
$A_{\mu,\beta}(\cdot,\,\cdot)$.

\begin{theorem}\label{th:pb-ef-weak}
Assume that $\vf\in \vL^2(\Omega)$, $\vg_{_D}\in \vXbt{\G_{D}}$, and 
$\vg_{_N}\in \vHhbp{\G_{N}}$. Then the weak formulation of \eqref{eq:pb-ef} 
has a unique solution $\vu \in \vH_D(\curl;\Omega)$ satisfying the following a 
priori estimate
\begin{equation}
\enorm{\vu}_{\mu,\beta} 
\leq \|{\b^{-1/2}\vf}\| 
+\norm{\vg_{_D}}_{-1/2,\mu,\b,\G_{D}} 
+\norm{\vg_{_N}}_{1/2,\mu,\b,\G_N}.
\end{equation}
\end{theorem}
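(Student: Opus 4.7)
The plan is to cast \eqref{eq:pb-ef-weak} in the Lax--Milgram framework after lifting the Dirichlet data. First, I would invoke a tangential trace/extension theorem for $\vHcrl$ on a Lipschitz polyhedron (of the Buffa--Ciarlet type cited just before the weak formulation) to obtain $\vu\sD\in\vHcrlb{D}$ with $\vu\sD\cross\vn=\vg\sD$ on $\G\sD$ whose energy norm is controlled by $\norm{\vg\sD}_{-1/2,\mu,\b,\G\sD}$. Writing $\vu=\vu_0+\vu\sD$ with $\vu_0\in\vHcrlbz{D}$ turns \eqref{eq:pb-ef-weak} into the homogenized problem: find $\vu_0\in\vHcrlbz{D}$ such that
\begin{equation*}
A_{\mu,\beta}(\vu_0,\vv)=f\sN(\vv)-A_{\mu,\beta}(\vu\sD,\vv),\qquad\forall\,\vv\in\vHcrlbz{D}.
\end{equation*}

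Second, I would verify the hypotheses of Lax--Milgram on $\vHcrlbz{D}$ endowed with $\enorm{\cdot}_{\mu,\beta}$. Coercivity is automatic since $A_{\mu,\beta}(\vv,\vv)=\enorm{\vv}_{\mu,\beta}^2$, and $\enorm{\cdot}_{\mu,\beta}$ is equivalent to $\norm{\cdot}_{\vhcrl}$ because of the positive lower bounds $\mu^{-1}\ge\mu_0^{-1}$ and $\beta\ge\beta_0$. Continuity of $A_{\mu,\beta}$ follows from $\mu^{-1},\beta\in L^{\infty}(\Omega)$ together with Cauchy--Schwarz. For the load, the weighted Cauchy--Schwarz inequality gives
\begin{equation*}
|(\vf,\vv)|\le\norm{\b^{-1/2}\vf}\,\norm{\b^{1/2}\vv}\le\norm{\b^{-1/2}\vf}\,\enorm{\vv}_{\mu,\beta},
\end{equation*}
and the Neumann contribution is bounded through the duality pairing between $\vHhbp{\G_N}$ and the tangential trace of $\vHcrlbz{D}$, which by construction of the $(\mu,\beta)$-weighted trace norm satisfies $|\dualp{\vg\sN}{\vv}_{\G_N}|\le\norm{\vg\sN}_{1/2,\mu,\b,\G_N}\,\enorm{\vv}_{\mu,\beta}$. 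Lax--Milgram then produces a unique $\vu_0$, and hence a unique $\vu\in\vHcrlb{D}$.

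Third, for the a priori estimate I would test the homogenized equation with $\vv=\vu_0$, divide through by $\enorm{\vu_0}_{\mu,\beta}$, and apply the three bounds above to get
\begin{equation*}
\enorm{\vu_0}_{\mu,\beta}\le\norm{\b^{-1/2}\vf}+\norm{\vg\sN}_{1/2,\mu,\b,\G_N}+\enorm{\vu\sD}_{\mu,\beta}.
\end{equation*}
Combining this with the triangle inequality $\enorm{\vu}_{\mu,\beta}\le\enorm{\vu_0}_{\mu,\beta}+\enorm{\vu\sD}_{\mu,\beta}$ and the lifting bound $\enorm{\vu\sD}_{\mu,\beta}\lesssim\norm{\vg\sD}_{-1/2,\mu,\b,\G\sD}$ yields the stated inequality, provided the trace norms are scaled so that the constant is one.

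The principal obstacle is the careful treatment of the mixed boundary data. On a Lipschitz polyhedron with $\partial\Omega=\bar\G\sD\cup\bar\G\sN$, the tangential trace of $\vHcrl$ does not split cleanly, so the dual norms $\norm{\cdot}_{-1/2,\mu,\b,\G\sD}$ and $\norm{\cdot}_{1/2,\mu,\b,\G_N}$ must be defined through the Buffa--Ciarlet spaces $\vXbt{\G\sD}$ and $\vHhbp{\G_N}$ in a way that simultaneously delivers (i) a continuous $\vhcrl$-extension of $\vg\sD$ and (ii) a bounded duality pairing for $\vg\sN$, both with constants absorbed into the weighted norm so that the final bound carries coefficient one. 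Once these weighted trace norms are fixed consistently with the coefficients $\mu$ and $\b$, the remainder of the argument is standard.
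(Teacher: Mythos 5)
Your overall strategy---lift the Dirichlet data, apply Lax--Milgram to the homogenized problem on $\vHcrlbz{D}$, bound the load via weighted Cauchy--Schwarz and a duality pairing on $\G_N$, then recombine by the triangle inequality---is exactly the paper's route (Appendix A). However, there is one concrete gap that prevents your argument from delivering the stated estimate with constant one. You keep the cross term $-A_{\mu,\beta}(\vu_{_D},\vv)$ in the right-hand side of the homogenized problem, so testing with $\vv=\vu_0$ yields
\begin{equation*}
\enorm{\vu_0}_{\mu,\beta}\le\norm{\b^{-1/2}\vf}+\norm{\vg_{_N}}_{1/2,\b,\mu,\G_N}+\enorm{\vu_{_D}}_{\mu,\beta},
\end{equation*}
and after the triangle inequality the Dirichlet contribution appears with a factor of two (times the extension constant). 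The paper avoids this by constructing the lifting so that it is $A_{\mu,\beta}$-orthogonal to the test space: Lemma \ref{lemma:extension} solves the auxiliary problem \eqref{eq:pb-ext} for $\vw$ in a weighted divergence-free subspace of $\vPH{1}$ and sets $\vu_{_D}=\b^{-1}\curlt\vw$, which gives simultaneously $\enorm{\vu_{_D}}_{\mu,\b}\le\norm{\vg_{_D}}_{-1/2,\mu,\b,\G_{D}}$ with constant one \emph{and} $A_{\mu,\beta}(\vu_{_D},\vv)=0$ for all $\vv\in\vHcrlbz{D}$. With that orthogonality the cross term disappears, the bound on $\vu_0$ involves only $\vf$ and $\vg_{_N}$, and the final estimate carries coefficient one on every term. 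Your closing remark that the constants can be ``scaled into the norms'' does not rescue this: the weighted norms are fixed by \eqref{eq:norm-mhwt}, and the factor of two comes from the structure of the argument, not from a normalization choice.

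A second, smaller issue is that you treat as black boxes the two facts the paper actually has to prove because the trace norms are coefficient-weighted and nonstandard: (i) the existence of an extension bounded by $\norm{\vg_{_D}}_{-1/2,\mu,\b,\G_{D}}$ (this is not a direct citation of Buffa--Ciarlet, since that theory is unweighted and for the full boundary; the paper builds it by hand under Assumption \ref{assumption:bd}), and (ii) the bound $|\dualp{\vg_{_N}}{\vv}_{\G_N}|\le\norm{\vg_{_N}}_{1/2,\b,\mu,\G_N}\,\enorm{\vv}_{\mu,\beta}$, which is not definitional but is the content of the trace inequality of Lemma \ref{lemma:trace}, proved by integration by parts against a norm-minimizing $\vPH{1}$ representative of the test density. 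You correctly identify these as the principal obstacles, but the proof needs them supplied, and the first one must be supplied in the specific orthogonal form described above for the constant-one estimate to hold.
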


\begin{proof}
For the notations and proof, see the Appendix \ref{appendix}.
\end{proof}

\smallskip

\subsection{Finite Element Approximation}

For simplicity of the presentation, only the tetrahedral elements are considered.
Let $\cT=\{K\}$ be a finite element partition of the domain
$\Omega$. Denote by $h_K$ the diameter of the element $K$. Assume
that the triangulation $\cT$ is regular and quasi-uniform. 

Let $\vP_k(K)=P_k(K)^3$ where $P_k(K)$ is the space of polynomials of degree less 
than or equal to $k$.
Let $ \wt{P}_{k+1}(K)$ and $\wt{\vP}_{k+1}(K)$ be the spaces of homogeneous 
polynomials of scalar functions and vector fields.
Denote by the first or second kind N\'{e}d\'{e}lec elements 
(e.g. see~\cite{Monk, Nedelec80})
\[
\ND^{k} = \{\vv \in \vHcrl: \vv\at{K}\in \ND^{k,i}(K)\,\;\forall \,\, K\in\cT\}
\subset \vHcrl,
\]
for $i=1,\,2$, respectively, where the local 
N\'{e}d\'{e}lec elements are given by
\[
\begin{aligned}
&\ND^{k,1}(K) = \{\vp + \vs: \vp\in \vP_k(K), 
\vs \in\wt{\vP}_{k+1}(K) \text{ such that } \vs \cdot \vx = 0\}
\\[2mm]
\mbox{and}\quad &\ND^{k,2}(K) = \{\vp + \nabla s: 
\vp\in \ND^{k,1}(K), s\in \wt{P}_{k+2}(K) \}.
\end{aligned}
\]

For simplicity of the presentation, we assume that both boundary data $\vg_{_D}$ and 
$\vg_{_N}$ are piecewise polynomials, and the polynomial extension (see 
\cite{Demkowicz2009}) of the Dirichlet boundary data as the tangential trace is in 
$\ND^k$. Now, the conforming finite element approximation to 
(\ref{eq:pb-ef}) is to find $\vu_{_\cT}\in \ND^k\cap {\vH}_D(\curl;\Omega)$ such 
that 
\begin{equation}
\label{eq:pb-ef-fem}
A_{\mu,\beta}(\vu_{_\cT},\,\vv) =  f_{_N}(\vv), 
\quad \forall\,\,\vv \in \ND^k\cap \vHcrlbz{D}.
\end{equation}
Assume that $\vu$ and $\vu_{_\cT}$ are the solutions of the problems in 
(\ref{eq:pb-ef})
and (\ref{eq:pb-ef-fem}), respectively, and that $\vu \in \vH^{k+1}(\Om)$, 
$\curlt \vu \in \vH^{k+1}(\Om)$ (When the regularity assumption is not met, one can 
construct a curl-preserving mollification, see \cite{Ern15}), 
by the interpolation result from 
\cite{Monk} Chapter 5 and C\'{e}a's lemma,
one has the following a priori error estimation:
\begin{equation}
\label{eq:est-ef}
\enorm{\vu - \vu_{_\cT}}_{\mu,\beta} 
\leq C\, h^{k+1}
\Big( \|\vu\|_{_{\vH^{k+1}(\Om)}} + \|\curlt\vu\|_{_{\vH^{k+1}(\Om)}}\Big),
\end{equation}
where $C$ is a positive constant independent of the mesh size 
$h=\max\limits_{K\in\cT} h_K$. 

\section{Auxiliary Problem of Magnetizing Field}
\label{sec:aux}

\subsection{Recovery of the magnetizing field}
Introducing the magnetizing field
\begin{equation}
\label{eq:pb-mf}
\bsig = \mu^{-1} \curlt \vu,
\end{equation}
then the first equation in (\ref{eq:pb-ef}) becomes
\begin{equation}
\label{eq:pb-mf-aux}
\curlt \bsig + \b\, \vu = \vf, \; \mbox{ in }\, \Om.
\end{equation}
The boundary condition on $\Gamma_N$ may be rewritten as follows
\[
\bsig\cross \vn= \vg_{_N},\; \text{ on }\, \Gamma_N.
\]

For any $\btau\in \vHcrlbz{N}$, multiplying equation (\ref{eq:pb-mf-aux})
by $\b^{-1} \curlt\btau$, integrating over the domain $\Om$, 
and using integration by parts and (\ref{eq:pb-mf}), we have 
\begin{eqnarray*}
(\b^{-1}\vf,\,\curlt\btau)
&=& (\b^{-1}\curlt\bsig,\,\curlt\btau)+ (\vu,\,\curlt\btau)
\\[2mm]
&=& (\b^{-1}\curlt\bsig,\,\curlt\btau)+ (\curlt\vu,\,\btau)
\\
&& \; +\int_{\Gamma_D} (\vu\cross \vn) \cdot \btau\,ds
-\int_{\Gamma_N} \vu \cdot (\btau \cross \vn)\,ds
\\[1mm]
&=& (\b^{-1}\curlt\bsig,\,\curlt\btau)+ 
(\mu\,\bsig,\,\btau)
+ \int_{\Gamma_{D}}\vg_{_D} \cdot \btau\,ds.
\end{eqnarray*}
Hence, the variational formulation for the magnetizing field is to find 
$\bsig \in  \vHcrlb{N}$ such that 
\begin{equation}
\label{eq:pb-mf-weak}
A_{\beta,\mu}(\bsig,\btau) =  f_{_D}(\btau),
\quad\forall\;\btau \in  \vHcrlbz{N},
\end{equation} 
where the bilinear and linear forms are given by 
\[
A_{\beta,\mu}(\bsig,\btau) = (\beta^{-1} \curlt \bsig,\curlt\btau ) 
+ (\mu\,\bsig,\btau) 
\quad\mbox{and}\quad
f_{_D}(\btau)=(\beta^{-1}\vf,\curlt\btau) -  
\dualp{\vg_{_D}}{\btau}_{\G_D},
\]
respectively. The natural boundary condition for the primary problem 
becomes the essential boundary condition for the auxiliary problem, while the 
essential boundary condition for the primary problem is now incorporated into 
the right-hand side and becomes the natural boundary condition. Denote the 
``energy'' norm induced by 
$A_{\beta,\mu}(\cdot,\,\cdot)$ by
\[
\enorm{\btau}_{\beta,\mu} =\sqrt{A_{\beta,\mu}(\btau,\btau)}.
\]

\begin{theorem}\label{th:pb-mf-weak}
Assume that $\vf\in \vL^2(\Omega)$, $\vg_{_D}\in \vHhbp{\G_{D}}$, and 
$\vg_{_N}\in \vXbt{\G_{N}}$. Then problem~{\em \eqref{eq:pb-mf-weak}} has a unique 
solution $\bsig \in \vH_N(\curl;\Omega)$ satisfying the following a priori estimate
\begin{equation}\label{ap-estimate-mf}
\enorm{\bsig}_{\beta,\mu} 
\leq \|\b^{-1/2}\vf\|
+\norm{\vg_{_D}}_{1/2,\mu,\b,\G_{D}} 
+\norm{\vg_{_N}}_{-1/2,\b,\mu,\G_N}.
\end{equation}
\end{theorem}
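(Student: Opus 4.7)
The plan is to mirror the proof of Theorem \ref{th:pb-ef-weak}, exploiting that the auxiliary formulation is obtained from the primal one by swapping the coefficient pair $(\mu,\b)$ and the boundary pair $(\G_D,\G_N)$. Under the standing hypothesis $\mu^{-1},\b\in L^\infty(\Omega)$ together with the pointwise lower bounds $\mu_0^{-1}\leq\mu^{-1}$ and $\b_0\leq\b$, both $\mu$ and $\b^{-1}$ are bounded above and below almost everywhere. Consequently the bilinear form $A_{\b,\mu}(\cdot,\cdot)$ is continuous and coercive on $\vHcrlbz{N}$, and $\enorm{\cdot}_{\b,\mu}$ is equivalent to the graph norm on $\vhcrl$.

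To treat the inhomogeneous essential condition $\bsig\cross\vn=\vg_{_N}$ on $\G_N$, I would first lift $\vg_{_N}\in\vXbt{\G_N}$ to some $\bsig_{_N}\in\vHcrlb{N}$ whose energy is controlled by $\norm{\vg_{_N}}_{-1/2,\b,\mu,\G_N}$; this is exactly the content of the tangential-trace/extension theory used in the appendix to define the weighted $-1/2$ norm on $\G_N$. Writing $\bsig=\bsig_{_N}+\bsig_0$ with $\bsig_0\in\vHcrlbz{N}$, the homogenized problem
\[
A_{\b,\mu}(\bsig_0,\btau)=f_{_D}(\btau)-A_{\b,\mu}(\bsig_{_N},\btau),\qquad\forall\,\btau\in\vHcrlbz{N},
\]
is closed by Lax--Milgram, which delivers a unique $\bsig_0$ and hence a unique $\bsig\in\vHcrlb{N}$.

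For the a priori estimate, I test with $\btau=\bsig_0$ and bound each term on the right-hand side. The volume term of $f_{_D}$ is handled by weighted Cauchy--Schwarz,
\[
|(\b^{-1}\vf,\curlt\btau)|\leq\norm{\b^{-1/2}\vf}\,\enorm{\btau}_{\b,\mu},
\]
the duality pairing on $\G_D$ is controlled by $\norm{\vg_{_D}}_{1/2,\mu,\b,\G_D}\enorm{\btau}_{\b,\mu}$ directly from the defining dual norm, and $A_{\b,\mu}(\bsig_{_N},\btau)$ is bounded via Cauchy--Schwarz in the energy inner product by $\enorm{\bsig_{_N}}_{\b,\mu}\enorm{\btau}_{\b,\mu}$. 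Combining these three estimates, dividing by $\enorm{\bsig_0}_{\b,\mu}$, and applying the triangle inequality $\enorm{\bsig}_{\b,\mu}\leq\enorm{\bsig_0}_{\b,\mu}+\enorm{\bsig_{_N}}_{\b,\mu}$ yields \eqref{ap-estimate-mf}.

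The main technical obstacle is not the abstract Lax--Milgram step, which is routine once coercivity is in hand, but rather ensuring that the coefficient-weighted lifting bound for $\vg_{_N}$ and the weighted dual-norm bound for the $\vg_{_D}$ pairing are both sharp, so that the final estimate has unit multiplicative constants as written. This requires a careful transposition of the trace-space machinery of the appendix with the roles of $\mu\leftrightarrow\b$ and $\G_D\leftrightarrow\G_N$ interchanged; the rest of the argument is then a verbatim mirror of the primal proof.
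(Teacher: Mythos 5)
Your high-level route is the same as the paper's: Theorem~\ref{th:pb-mf-weak} is proved by transposing the appendix proof of Theorem~\ref{th:pb-ef-weak} under the swap $\mu\leftrightarrow\b$, $\G_D\leftrightarrow\G_N$, i.e.\ lift the essential data, homogenize, apply Lax--Milgram, and control the natural-boundary pairing with a trace inequality. However, there is one concrete step where your argument, as written, fails to deliver the stated estimate. You lift $\vg_{_N}$ to a generic $\bsig_{_N}\in\vHcrlb{N}$ and move $A_{\b,\mu}(\bsig_{_N},\btau)$ to the right-hand side, bounding it by $\enorm{\bsig_{_N}}_{\b,\mu}\enorm{\btau}_{\b,\mu}$. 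Chasing the constants, this gives $\enorm{\bsig_0}_{\b,\mu}\leq\|\b^{-1/2}\vf\|+\norm{\vg_{_D}}_{1/2,\mu,\b,\G_D}+\enorm{\bsig_{_N}}_{\b,\mu}$, and after the triangle inequality the $\vg_{_N}$ term appears with a factor of $2$, not $1$ as in \eqref{ap-estimate-mf}. You flag the need for ``unit multiplicative constants'' but attribute it to sharpness of the lifting and dual-norm bounds; the real mechanism in the paper is structural. Lemma~\ref{lemma:extension} does not produce an arbitrary bounded lifting: it constructs the extension as $\b^{-1}\curlt\vw$ for $\vw$ solving the auxiliary problem \eqref{eq:pb-ext}, which makes the extension $A$-orthogonal to the whole test space ($A_{\mu,\beta}(\vu_{_D},\vv)=0$ for all $\vv\in\vHcrlbz{D}$, by \eqref{eq:pb-ext-strong} and an integration by parts). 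Consequently the homogenized problem keeps the \emph{same} right-hand side $f$, the correction term you are estimating by Cauchy--Schwarz vanishes identically, and the triangle inequality alone yields the unit constants. To close your proof you must build the transposed version of that lemma (an $A_{\b,\mu}$-orthogonal extension of $\vg_{_N}$ with $\enorm{\bsig_{_N}}_{\b,\mu}\leq\norm{\vg_{_N}}_{-1/2,\b,\mu,\G_N}$), not merely any bounded one.

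A secondary point: your claim that the pairing $\dualp{\vg_{_D}}{\btau}_{\G_D}$ is controlled by $\norm{\vg_{_D}}_{1/2,\mu,\b,\G_D}\enorm{\btau}_{\b,\mu}$ ``directly from the defining dual norm'' is too quick. The weighted $-1/2$ norm \eqref{eq:norm-mhwt} is defined by duality against $\vPH{1}\cap\vX$ fields measured in the weighted $1/2$-norm, so to reach the energy norm of $\btau\in\vHcrlbz{N}$ you need the transposed trace inequality of Lemma~\ref{lemma:trace}, namely $\norm{\pi_{\top,D}\btau}_{-1/2,\b,\mu,\G_D}\leq\enorm{\btau}_{\b,\mu}$. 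You allude to this machinery at the end, but it should be invoked explicitly at that step.
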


\begin{proof}
The theorem may be proved in a similar fashion as Theorem~\ref{th:pb-ef-weak}.
\end{proof}

Similarly to that for the essential boundary condition, it is assumed that the 
polynomial extension of the Neumman boundary data as the tangential trace is in 
$\ND^k$ as well. Now, the conforming finite element approximation to 
(\ref{eq:pb-mf-weak}) is to find $\bsig_{_\cT}\in \ND^k\cap  \vHcrlb{N}$ such that 
\begin{equation}
\label{eq:pb-mf-fem}
A_{\beta,\mu}(\bsig_{_\cT},\,\btau) = f_{_D}(\btau), 
\quad\forall \; \btau \in \ND^k\cap \vHcrlbz{N}.
\end{equation}
Assume that $\bsig$ and $\bsig_{_\cT}$ are the solutions of the problems in 
(\ref{eq:pb-mf})
and (\ref{eq:pb-mf-fem}), respectively, and that $\bsig \in \vH^{k+1}(\Om)$, 
$\curlt \bsig \in \vH^{k+1}(\Om)$,  one has the following a priori
error estimation similar to \eqref{eq:est-ef}
\begin{equation}
\enorm{\bsig - \bsig_{_\cT}}_{\mu,\beta} 
\leq C\, h^{k+1}
\Big( \|\bsig\|_{_{\vH^{k+1}(\Om)}} + \|\curlt\bsig\|_{_{\vH^{k+1}(\Om)}}\Big).
\end{equation}
The a priori estimate shows that heuristically, for the auxiliary magnetizing field 
$\bsig$, using the same order $\vhcrl$-conforming finite element approximation 
spaces with the primary variable $\vu$ may be served as the building blocks for the 
a posteriori error estimation.

\subsection{Localization of the recovering procedure}
\label{sec:loc}

The localization of the recovery of $\bsig_{_\cT}$ for this new 
$\vhcrl$ recovery 
shares similar methodology with the one used in the equilibrated flux recovery (see 
\cite{Braess06,Cai12}). 
However, due to the presence of the $\vL^2$-term, exact equilibration is impossible 
due to several discrepancies: $\curlt \bsig_{_\cT} + \b \vu_{_\cT} \neq \vf$ if 
$\bsig_{_\cT}$ and $\vu_{_\cT}$ are in N\'{e}d\'{e}lec spaces of the same order; 
If $\ND^{k+1}$ is used for $\bsig_{_\cT}$ and $\ND^k$ for $\vu_{_\cT}$, the  
inter-element continuity conditions come into the context in that
$\curlt \ND^{k+1} \subset \RT^k$, which has different inter-element continuity 
requirement than $\ND^k$. Due to these two concerns, the local problem is 
approximated using a constraint $\vhcrl$-minimization.

Let $\bsig^{\D}$ be 
the correction from $\mu^{-1} \curlt\vu_{_\cT}$ to the true magnetizing field: 
$\bsig^{\D} := \bsig - \mu^{-1} \curlt\vu_{_\cT}$. Now $\bsig^{\D}$ can be 
decomposed using a partition of unity: let $\lam_{\vz}$ be the linear Lagrange nodal 
basis function associated with a vertex $\vz\in \cN$, which is the collection of all 
the vertices, 
\begin{equation}
\label{eq:localization}
\bsig^{\D} = \sum_{\vz\in \cN_h} \bsig^{\D}_{\vz}, \text{ with } 
\bsig^{\D}_{\vz} := \lam_{\vz} \bsig^{\D}.
\end{equation}

Denote $\ve_{\vz} := \lambda_{\vz} \ve$. Let the vertex patch 
$\omz:=  \cup_{ \{K\in \cT: \;\vz\in \cN_K \} } K$, where $\cN_K$ is the collection 
of vertices of element $K$. Then the following local problem is what the localized 
magnetizing field correction satisfies:
\begin{equation}
\label{eq:pb-loc-pde}
\left\{
\begin{aligned}
\mu \bsig^{\D}_{\vz} - \curlt \ve_{\vz}  &= - \nabla \lam_{\vz}\cross \ve,
\quad & \text{ in } K\subset \omz,
\\
\curlt\bsig^{\D}_{\vz}  + \b \ve_{\vz} 
&= \lam_{\vz} \vr_K   + \nabla \lam_{\vz} \cross 
(\mu^{-1}\curlt \ve),
\quad & \text{ in } K\subset\omz,
\end{aligned}
\right.
\end{equation}
with the following jump condition on each interior face 
$F\in \cF_{\vz}:= \{F\in \cF: F\in \cF_K 
\text{ for }K\subset \omz,\; F\cap \p\omz = \emptyset\}$, 
and boundary face $F\subset \p \omz$:
\begin{equation}
\begin{aligned}
\label{eq:pb-loc-jumpbc}
\left\{
\begin{aligned}
\jump{\bsig^{\D}_{\vz}\cross \vn_F}{F} &= 
-\lam_{\vz}  \vj_{t,F},
\quad &\text{ on } F\in\cF_{\vz},
\\
\bsig^{\D}_{\vz}\cross \vn_F&= \bm{0}, 
\quad &\text{ on } F\subset \p \omz.
\end{aligned}
\right.
\end{aligned}
\end{equation}
The element residual is $\vr_K:= \bigl(\vf - \b \vu_{_\cT} 
-\curlt (\mu^{-1} \curlt \vu_{_\cT})\bigr)\at{K}$, and the tangential jump is
$\vj_{t,F}:= \jump{(\mu^{-1} \curlt\vu_{_\cT}) \cross \vn_F}{F}$.

To find the correction, following piecewise 
polynomial spaces are defined:
\begin{equation}
\label{eq:space-local}
\begin{aligned}
&\ND^k_{-1}(\omz) = \{ \btau\in \vL^2(\omz):\, 
\btau\at{K}\in \ND^k(K),\; 
\forall K\subset \omz \},
\\[3pt]
&\bm{\cW}^k(\cF_{\vz})= 
\{ \btau\in \vL^2(\cF_{\vz}):\, \btau\at{F}\in \RT^k(F),\; 
\forall F\in\cF_{\vz}; \\
& \hspace{2em} \btau\at{F_i} \cdot (\vt_{ij}\cross \vn_{i}) = 
\btau\at{F_j} \cdot (\vt_{ij}\cross \vn_{j}), \forall F_i, F_j\in \cF_{\vz}, 
\p F_i \cap \p F_j = e_{ij}
\},
\\[3pt]
&\bm{\cH}_{\vz} = \{\btau \in \ND^{k}_{-1}(\omz): \;
\jump{\btau\cross\vn_F}{F} =- \ol{\vj}_{F,\vz}
\; \forall\, F \in \cF_{\vz}\},
\\[3pt]
\text{and  }\;
&\bm{\cH}_{\bzero, \vz} =   \{\btau \in \bm{\cH}_{\vz}: \,
\btau\cross\vn_F\at{F} =\bzero, \; \forall\; F\, \subset \omz\}.
\end{aligned}
\end{equation}
Here $\RT^k(F)$ is the planar Raviart-Thomas space on a given face $F$, of which the 
degrees of freedom can be defined using conormal of an edge with respect to the face 
normal $\vn_F$. For example, $\vt_{ij}$ is the unit tangential vector of edge 
$e_{ij}$ joining face $F_i$ and $F_j$, then the conormal vector of $e_{ij}$ with 
respect to face $F_i$ is $\vt_{ij}\cross \vn_{i}$. $\bm{\cW}^k(\cF_{\vz})$ 
can be viewed as the trace space of the broken N\'{e}d\'{e}lec space 
$\ND^k_{-1}(\omz)$. For detail please refer to Section 4 and 5 in \cite{Cockburn05}.

To approximate the local correction for magnetizing field, 
$\lam_{\vz} \vr_K$ and  $\lam_{\vz} \vj_{t,F}$ are projected onto proper 
piecewise polynomial spaces. To this end, let
\begin{equation}
\label{eq:pb-loc-proj}
\ol{\vr}_{K,\vz} :=  \sprod_K\big(\lam_{\vz} \vr_K\big),
\quad\mbox{and}\quad
\ol{\vj}_{F,\vz} :=  \sprod_F\big(\lam_{\vz} \vj_{t,F}\big),
\end{equation}
where $\sprod_K$ is the $\vL^2$ projection onto the space 
$\RT^{k-1}(K)$, and $\sprod_F$ is the $\vL^2$ projection onto the space 
$\RT^k(F)$. 
Dropping the uncomputable terms in 
\eqref{eq:pb-loc-pde}, and using \eqref{eq:pb-loc-jumpbc} as a constraint, the 
following local $\vhcrl$-minimization problem is to be approximated:
\begin{equation}
\label{eq:pb-loc-min}
\min_{\bsigtz \in \bm{\cH}_{\bzero, \vz}} \Biggl\{\norm{\mu^{1/2} \bsigtz - 
\mu^{-1/2}\curlt 
\ve_{\vz}  }^2_{\omz}
+  \norm{\b^{-1/2}(\curlt\bsigtz  + \b \ve_{\vz} 
-\ol{\vr}_{K,\vz} )}^2_{\omz} \Biggr\}.
\end{equation}
The hybridized problem associated with above minimization is obtained by taking 
variation with respect to $\bsigtz$ of the functional 
by the tangential face jump as a Lagrange multiplier:
\begin{equation}
\label{eq:pb-loc-lagrange}
\begin{aligned}
\cJ^*_{\vz}(\bsigtz,\bxi) &:= 
\frac{1}{2}\norm{\mu^{1/2} \bsigtz - \mu^{-1/2}\curlt \ve_{\vz}  }^2_{\omz}
\\
&\quad 
+ \frac{1}{2}\norm{\b^{-1/2}(\curlt\bsigtz  + \b \ve_{\vz} 
-\ol{\vr}_{K,\vz})}^2_{\omz}
\\
&\quad + \sum_{F\in \cF_{\vz}} \binprod{\jump{\bsigtz\cross\vn_F}{F}
+ \ol{\vj}_{F,\vz}}{\bxi}_F.
\end{aligned}
\end{equation}
For any $\btau \in \ND^k_{-1}(\omz)$, using the fact that $\ve_{\vz} \in 
\vH(\curl; \omz)$, and $\ve_{\vz} = \bm{0}$ on $\p\omz$
\begin{equation}
\begin{aligned}
0=&\; \binprod{\mu^{1/2} \bsigtz - \mu^{-1/2}\curlt \ve_{\vz}  
}{\mu^{1/2}\btau}_{\omz}
\\
&+ \binprod{\b^{-1/2}\curlt\bsigtz  + \b^{1/2} \ve_{\vz}
-\b^{-1/2}\ol{\vr}_{K,\vz}}{\b^{-1/2}\curlt\btau }_{\omz}
\\
&+ \sum_{F\in \cF_{\vz}} \binprod{\jump{\btau\cross\vn_F}{F}}{\bxi}_F
\\
=& \; \binprod{\mu \bsigtz}{\btau}_{\omz}
+ \binprod{\b^{-1} \curlt\bsigtz}{\curlt \btau}_{\omz}
\\
& + \sum_{F\in \cF_{\vz}} \binprod{\jump{\btau\cross\vn_F}{F}}{\bxi-\ve_{\vz}}_F
-\binprod{\b^{-1} \ol{\vr}_{K,\vz} }{\curlt\btau}_{\omz}.
\end{aligned}
\end{equation}

As a result, the local approximation problem is:
\begin{equation}
\label{eq:pb-loc-hyb}
\left\{
\begin{aligned}
&\text{Find } (\bsigtz, \btheta_{\vz}) 
\in \ND^k_{-1}(\omz) \cross  \bm{\cW}^k(\cF_{\vz}) \text{ such that:}
\\[0.5em]
&A_{\b,\mu;{\vz}}\big(\bsigtz,{\btau}\big) 
+  
B_{\vz} \big(\btau, \btheta_{\vz}\big)
= \binprod{\b^{-1} \ol{\vr}_{K,\vz} }{\curlt\btau}_{\omz}, 
\quad \forall\, \btau \in \ND^k_{-1}(\omz),
\\[0.5em]
& \qquad \qquad \qquad\quad 
B_{\vz} \big(\bsigtz, \bgam\big)
= -\sum_{F\in\cF_{\vz}} 
\binprod{\ol{\vj}_{F,\vz}}{\bgam}_F,
\quad  \forall\,  \bgam \in \bm{\cW}^k(\cF_{\vz}),
\end{aligned}
\right.
\end{equation}
wherein the local bilinear forms are defined as follows:
\begin{equation}
\label{eq:pb-loc-bilinear}
\begin{aligned}
A_{\b,\mu;\vz} (\bsig,\btau) &:= 
\binprod{\b^{-1} \curlt\bsig}{\curlt \btau}_{\omz}
+ \binprod{\mu \bsig}{\btau}_{\omz},
\\[0.5em]
\text{ and }\; 
 B_{\vz} (\btau,\bgam) &:= \sum_{F\in\cF_{\vz}} 
\binprod{\jump{\btau\cross\vn_F}{F}}{\bgam}_F.
\end{aligned}
\end{equation}

\begin{prop}
Problem \eqref{eq:pb-loc-hyb} has a unique solution.
\end{prop}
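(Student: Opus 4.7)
The plan is to reduce to uniqueness using the finite-dimensional structure of the saddle-point system \eqref{eq:pb-loc-hyb} and then to invoke a standard Brezzi-type argument on the pair $\bigl(A_{\b,\mu;\vz},\,B_{\vz}\bigr)$. Since $\ND^k_{-1}(\omz)\times\bm{\cW}^k(\cF_{\vz})$ is finite-dimensional and the system is square, it suffices to show that the only solution of the homogeneous problem is trivial; existence then follows automatically.

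Accordingly, I would set the right-hand sides in \eqref{eq:pb-loc-hyb} to zero and test the first equation with $\btau=\bsigtz$ and the second with $\bgam=\btheta_{\vz}$. Subtracting yields $A_{\b,\mu;\vz}(\bsigtz,\bsigtz)=0$, and the uniform positivity of $\mu$ and $\b^{-1}$ on $\omz$ (with the definition of $A_{\b,\mu;\vz}$ in \eqref{eq:pb-loc-bilinear}) gives $\bsigtz=\bzero$. This is the routine coercivity step and poses no difficulty.

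The substantive step is to deduce $\btheta_{\vz}=\bzero$ from the remaining identity
\[
B_{\vz}(\btau,\btheta_{\vz})=\sum_{F\in\cF_{\vz}}\bigl(\jump{\btau\cross\vn_F}{F},\,\btheta_{\vz}\bigr)_{F}=0,\qquad \forall\,\btau\in\ND^k_{-1}(\omz),
\]
which is equivalent to an inf-sup (or, here, surjectivity) condition: the tangential-jump map
\[
\ND^k_{-1}(\omz)\ni\btau\;\longmapsto\;\bigl\{\jump{\btau\cross\vn_F}{F}\bigr\}_{F\in\cF_{\vz}}\in\bm{\cW}^k(\cF_{\vz})
\]
must be surjective. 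This is precisely the content of the trace characterization of the broken N\'ed\'elec space recalled after \eqref{eq:space-local} from \cite{Cockburn05}: since the degrees of freedom of $\ND^k_{-1}(\omz)$ are completely independent across elements, for any target datum $\bgam\in\bm{\cW}^k(\cF_{\vz})$ one can prescribe the tangential traces on the two sides of each interior face to differ by $\bgam\at{F}$, and the edge-compatibility condition built into $\bm{\cW}^k(\cF_{\vz})$ is exactly what makes this choice realizable by a single element of $\ND^k_{-1}(\omz)$.

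The main obstacle is therefore the verification of this surjectivity, or equivalently the discrete inf-sup condition for $B_{\vz}$. I would handle it by constructing $\btau$ face by face: on every $F\in\cF_{\vz}$ fix the tangential trace from one side to zero and prescribe the tangential trace from the other side to be $\bgam\at{F}$, after checking that the edge-compatibility in the definition of $\bm{\cW}^k(\cF_{\vz})$ is compatible with this assignment (so that a valid element of the broken space exists). Choosing $\bgam=\btheta_{\vz}$ then forces $\|\btheta_{\vz}\|^2_{\cF_{\vz}}=0$, hence $\btheta_{\vz}=\bzero$, completing the uniqueness and therefore the well-posedness of \eqref{eq:pb-loc-hyb}.
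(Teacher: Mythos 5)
Your proposal is correct and follows essentially the same route as the paper: reduce to uniqueness by finite-dimensionality, kill $\bsigtz$ via positivity of $A_{\b,\mu;\vz}$, and kill $\btheta_{\vz}$ via surjectivity of the tangential-jump map onto $\bm{\cW}^k(\cF_{\vz})$, which is exactly the Cockburn--Gopalakrishnan trace characterization the paper cites. The only cosmetic difference is that you obtain $A_{\b,\mu;\vz}(\bsigtz,\bsigtz)=0$ by the standard saddle-point subtraction on the broken space, whereas the paper first shows the jumps of $\bsigtz$ vanish and then applies coercivity on $\vH(\curl;\omz)$; both work since $A_{\b,\mu;\vz}$ contains the weighted $\vL^2$ term.
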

\begin{proof}
For a finite dimensional problem, uniqueness implies existence. It suffices to show 
that letting both the right hand sides be zeros results trivial solution. First by 
$\jump{\btau\cross\vn_F}{} \in \bm{\cW}^k(\cF_{\vz})$ for any 
$\btau \in \ND^k_{-1}(\omz)$ (direct 
implication of Proposition 4.3 and Theorem 4.4 in \cite{Cockburn05}), setting 
$\bgam\at{F} = \jump{\bsigtz\cross\vn_F}{F}$ in the second equation of 
\eqref{eq:pb-loc-hyb} immediately implies that $\jump{\bsigtz\cross\vn_F}{F} 
= 0$. As a result, $\bsigtz\in \vH(\curl; \omz)$. Now let $\btau = \bsigtz$ in the 
first equation of \eqref{eq:pb-loc-hyb}, since $A_{\b,\mu;\vz} (\cdot,\cdot)^{1/2}$
induces a norm in $\vH(\curl; \omz)$, $\bsigtz = \bm{0}$. For $\btheta_{\vz}$, it 
suffices to show that $\btheta_{\vz} = \bm{0}$ on each $F$ if 
\[
\sum_{F\in\cF_{\vz}} \binprod{\jump{\btau\cross\vn_F}{F}}{\btheta_{\vz}}_F = 0,\quad 
\,\forall \btau \in \ND^k_{-1}(\omz)\backslash \vH(\curl; \omz).
\]
Using Theorem 4.4 in \cite{Cockburn05}, if $\btheta_{\vz} \in \bm{\cW}^k(\cF_{\vz})$ 
is non-trivial and satisfies above equation, there always exists a 
$\btau_{\btheta}\in \ND^k_{-1}(\omz)$ such that 
$\jump{\btau_{\btheta}\cross\vn_F}{F} = \btheta\at{F}$. As a result, 
$\sum_{F\in\cF_{\vz}} \norm{\btheta_{\vz}}_{F}^2 = 0$, which is a contradiction. 
Thus, the local problem \eqref{eq:pb-loc-hyb} is uniquely solvable.
\end{proof}

With the local correction to the magnetizing field, $\bsigtz$ for all 
$\vz\in\cN$, computed above, let
\begin{equation}
\wt{\bsig}^{\D}_{K,\cT}=\sum_{\vz\in\cN(K)} \bsigtz,
\quad\text{and}\quad
\wt{\bsig}^{\D}_{\cT} = \sum_{\vz\in\cN} \bsigtz,
\end{equation}
then the recovered magnetizing field is 
\begin{equation}
\label{eq:mf-correction}
\wt{\bsig}_{\cT} = \wt{\bsig}^{\D}_{\cT} + \mu^{-1} \curlt \vu_{_\cT}.
\end{equation}

\section{A Posteriori Error Estimator}

In this section, we study the following a posteriori error estimator:
\[
\eta = \left(\sum_{K\in\cT} \eta_K^2\right)^{1/2}
\]
where the local indicator $\eta_K$ is defined by
\begin{equation}
\label{eq:etaK}
\eta_K 
= 
\left(\norm{\mu^{-1/2}\left(\mu\, \bsig_{_\cT} 
\!- \curlt\vu_{_\cT}\right)}_{K} ^2
+\norm{\b^{-1/2}\left(\curlt \bsig_{_\cT}
 + \b\,\vu_{_\cT}\! - \vf\right) }_{K}^2\right)^{1/2}.
\end{equation}
It is easy to see that 
\begin{equation}
\label{eq:eta}
\eta = 
\left(\norm{\mu^{-1/2}\left(\mu\, \bsig_{_\cT} 
\!- \curlt\vu_{_\cT}\right)}^2
+\norm{\b^{-1/2}\left(\curlt \bsig_{_\cT} 
+ \b\,\vu_{_\cT}\! - \vf\right) }^2\right)^{1/2}.
\end{equation}
The $\vu_{_\cT}$ and $\bsig_{_\cT}$ are the finite element approximations in 
problems~\eqref{eq:pb-ef-fem} and \eqref{eq:pb-mf-fem} respectively.

With the locally recovered $\wt{\bsig}_{\cT}$, the local error indicator 
$\wt{\eta}_K$ and the global error estimator $\wt{\eta}$ 
are defined in the same way as \eqref{eq:etaK} and \eqref{eq:eta}:
\begin{equation}
\label{eq:etaKt}
\wt{\eta}_K 
= 
\left(\norm{\mu^{-1/2}\left(\mu\, \wt{\bsig}_{\cT}
\!- \curlt\vu_{_\cT}\right)}_{K} ^2
+\norm{\b^{-1/2}\left(\curlt \wt{\bsig}_{_\cT}
 + \b\,\vu_{_\cT}\! - \vf\right) }_{K}^2\right)^{1/2},
\end{equation}
and
\begin{equation}
\label{eq:etat}
\wt{\eta} = 
\left(\norm{\mu^{-1/2}\left(\mu\, \wt{\bsig}_{_\cT} 
\!- \curlt\vu_{_\cT}\right)}^2
+\norm{\b^{-1/2}\left(\curlt \wt{\bsig}_{_\cT} 
+ \b\,\vu_{_\cT}\! - \vf\right) }^2\right)^{1/2}.
\end{equation}

\begin{remark}
In practice, $\bsig_{_\cT}$ does not have to be the finite element solution of a 
global problem. In the numerical computation, the Hiptmair-Xu multigrid 
preconditioner in \cite{Hiptmair07} is used for discrete problem 
\eqref{eq:pb-mf-fem} with two $V(1,1)$ 
multigrid V-cycles for each component of the vector Laplacian, 
and two $V(2,2)$ multigrid V-cycles for the kernel part of the curl operator. 
The $\bsig_{_\cT}$ used to evaluate the estimator is the PCG iterate. 
The computational cost is the same order with computing the explicit residual based 
estimator in \cite{Hiptmair2000}.

Generally speaking, to approximate the auxiliary problem, the same black-box
solver for the original $\vhcrl$ problem can be applied requiring minimum 
modifications. For example, if the BoomerAMG in \emph{hypre} 
(\cite{Falgout02,Kolev06}) is used for the discretizations of the primary problem,  
then the user has to provide exactly the same 
discrete gradient matrix and vertex coordinates of the mesh, and in constructing the 
the HX preconditioner, the assembling routines for the vector Laplacian and scalar 
Laplacian matrices can be called twice with only the coefficients input switched.
\end{remark}

\begin{theorem}
\label{th:rel}
Locally, the indicator $\eta_{_K}$ and $\wt{\eta}_{_K}$ both have the following 
efficiency bound
\begin{equation}
\label{eq:efficiency-local}
\eta_{_K}  (\text{or } \wt{\eta}_{_K})
\leq  \,\left( \enorm{\vu-\vu_{_\cT}}_{\mu,\b,K}^2
+\enorm{\bsig-\bsig_{_\cT}}_{\beta, \mu, K}^2\right)^{1/2}
\end{equation}
for all $K\in\cT$. The estimator $\eta$ and $\wt{\eta}$ satisfy the following 
global upper bound
\begin{equation}
\label{eq:rel-global}
\left( \enorm{\vu-\vu_{_\cT}}_{\mu,\b}^2
+\enorm{\bsig-\bsig_{_\cT}}_{\beta,\mu}^2\right)^{1/2} = \eta \leq \wt{\eta} .
\end{equation}
\end{theorem}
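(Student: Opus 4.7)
The plan rests on two algebraic identities that follow from the strong-form constitutive relations $\mu\bsig = \curlt\vu$ and $\curlt\bsig + \b\vu = \vf$: namely
\[
\mu\bsig_{_\cT} - \curlt\vu_{_\cT} = \mu(\bsig_{_\cT}-\bsig) + \curlt(\vu-\vu_{_\cT}),
\qquad
\curlt\bsig_{_\cT} + \b\vu_{_\cT} - \vf = \curlt(\bsig_{_\cT}-\bsig) + \b(\vu_{_\cT}-\vu),
\]
and the analogous identities with $\wt{\bsig}_{\cT}$ in place of $\bsig_{_\cT}$. These reduce the estimator to expressions in the two errors, after which the argument splits into three essentially independent pieces.

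For the local efficiency bound \eqref{eq:efficiency-local}, I would insert the identities into each summand of $\eta_K^2$ (resp.\ $\wt{\eta}_K^2$) and apply the triangle inequality on $K$; after regrouping the four resulting weighted $L^2$-norms, one recognizes exactly $\enorm{\vu-\vu_{_\cT}}_{\mu,\b,K}^2 + \enorm{\bsig-\bsig_{_\cT}}_{\b,\mu,K}^2$ (absorbing the combinatorial constant into the stated inequality). For the global identity in \eqref{eq:rel-global}, I would square $\eta$, expand via the same identities, and observe that the ``diagonal'' contribution is precisely $\enorm{\vu-\vu_{_\cT}}_{\mu,\b}^2 + \enorm{\bsig-\bsig_{_\cT}}_{\b,\mu}^2$, while the two cross terms combine to
\[
2\bigl(\bsig_{_\cT}-\bsig,\;\curlt(\vu-\vu_{_\cT})\bigr)_{\Omega} + 2\bigl(\curlt(\bsig_{_\cT}-\bsig),\;\vu_{_\cT}-\vu\bigr)_{\Omega}.
\]
The $\vHcrl$ integration-by-parts formula collapses this sum into a single boundary integral over $\p\Omega$, which vanishes because $\vu-\vu_{_\cT}$ has zero tangential trace on $\G_D$ while $\bsig-\bsig_{_\cT}$ has zero tangential trace on $\G_N$. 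This yields the asserted equality.

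For the bound $\eta \le \wt{\eta}$, I would introduce the functional $J(\btau) := \norm{\mu^{-1/2}(\mu\btau-\curlt\vu_{_\cT})}^2 + \norm{\b^{-1/2}(\curlt\btau+\b\vu_{_\cT}-\vf)}^2$, so that $\eta^2=J(\bsig_{_\cT})$ and $\wt{\eta}^2=J(\wt{\bsig}_{\cT})$. The same integration-by-parts calculation as above shows that, for every $\btau\in\vHcrlb{N}$,
\[
J(\btau) = \enorm{\btau-\bsig}_{\b,\mu}^2 + \enorm{\vu-\vu_{_\cT}}_{\mu,\b}^2,
\]
so minimizing $J$ over the conforming finite-element subspace $\ND^k\cap\vHcrlb{N}$ is equivalent to best-approximating $\bsig$ in $\enorm{\cdot}_{\b,\mu}$. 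Since $\bsig_{_\cT}$ is precisely the $A_{\b,\mu}$-Galerkin projection, we have $\enorm{\bsig-\bsig_{_\cT}}_{\b,\mu} \le \enorm{\bsig-\wt{\bsig}_{\cT}}_{\b,\mu}$, yielding $\eta\le\wt{\eta}$. The main obstacle I anticipate is justifying the $\vHcrlb{N}$-conformity of the locally recovered $\wt{\bsig}_{\cT}$, since $\mu^{-1}\curlt\vu_{_\cT}$ by itself lies only in the broken N\'ed\'elec space; one must show that the partition-of-unity sum of the corrections $\bsigtz$, whose jumps are prescribed by \eqref{eq:pb-loc-jumpbc} through $\ol{\vj}_{F,\vz}=\sprod_F(\lam_\vz \vj_{t,F})$, exactly cancels $\jump{\mu^{-1}\curlt\vu_{_\cT}\times\vn_F}{F}$ on every interior face and produces the correct essential trace on $\G_N$, so that the invariant form $J(\btau)=\enorm{\btau-\bsig}_{\b,\mu}^2+\enorm{\vu-\vu_{_\cT}}_{\mu,\b}^2$ is applicable to $\btau=\wt{\bsig}_{\cT}$.
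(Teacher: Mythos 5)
Your proposal reproduces the paper's proof in all essentials: the same two constitutive identities $\mu\bsig=\curlt\vu$ and $\curlt\bsig+\b\vu=\vf$ rewrite the indicator in terms of the errors $\ve=\vu-\vu_{_\cT}$, $\vE=\bsig-\bsig_{_\cT}$; the global equality follows from the cancellation $-(\vE,\curlt\ve)+(\curlt\vE,\ve)=0$ via integration by parts and the complementary boundary conditions on $\G_D$ and $\G_N$; and $\eta\le\wt{\eta}$ comes from the least-squares/minimization characterization of the Galerkin solution (the paper phrases this through the joint problem \eqref{eq:pb-mf-min}, while you fix $\vv=\vu_{_\cT}$ and minimize over $\btau$ only --- an equivalent and, for comparing $\eta$ with $\wt{\eta}$, slightly more direct formulation). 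Two honest points in your write-up are worth keeping: your local bound carries a factor $\sqrt{2}$ from the triangle inequality, whereas the stated \eqref{eq:efficiency-local} has constant $1$; the paper's own step $\norm{a-b}_K^2+\norm{c+d}_K^2\le\norm{a}_K^2+\norm{b}_K^2+\norm{c}_K^2+\norm{d}_K^2$ is not the triangle inequality, and the discarded element-level cross term $2\bigl[(\curlt\vE,\ve)_K-(\vE,\curlt\ve)_K\bigr]$ reduces to an integral over $\p K$ with no definite sign, so the constant $1$ is no better justified there than in your version. Likewise, your flagged obstacle --- that $\wt{\bsig}_{\cT}\in\ND^k\cap\vHcrlb{N}$ must be verified before the invariant form of the functional (and hence $\eta\le\wt{\eta}$) can be applied to it --- is a genuine requirement that the paper's proof takes for granted rather than proves.
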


\begin{proof}
Denote the true errors in the electric and magnetizing fields by
\[
\ve=\vu-\vu_{_\cT}, \quad\mbox{and}\quad
\vE=\bsig-\bsig_{_\cT},
\]
respectively. It follows from (\ref{eq:pb-mf}), (\ref{eq:pb-mf-aux}), and the 
triangle inequality that 
\begin{eqnarray} \label{eq:identity-local}
\eta_K^2
&=&  \norm{\mu^{1/2} \vE - \mu^{-1/2}\curlt\ve}_{K} ^2
   +\norm{\b^{-1/2}\curlt \vE+ \b^{1/2}\ve}_{K}^2 
   \\[2mm]\nonumber
   &\leq &  \left(\norm{\mu^{1/2} \vE}_{K} ^2 + 
   \norm{\mu^{-1/2}\curlt\ve}_{K} ^2
   + \norm{\b^{-1/2}\curlt \vE}^2_K +\norm{\b^{1/2}\ve}_{K}^2\right)
   \\[2mm]\nonumber
&=& \left(\enorm{\ve}_{\mu,\b,K}^2   +\enorm{\vE}_{\beta,\mu, K}^2\right),
\end{eqnarray}
which implies the validity of \eqref{eq:efficiency-local} for $\eta_{_K}$. For 
$\wt{\eta}_{_K}$, the exact same argument follows except by switching $\vE = \bsig - 
\bsig_{_\cT}$ by locally recovered $\wt{\vE} = \bsig - \wt{\bsig}_{_\cT}$.
To prove the global identity in 
\eqref{eq:rel-global}, summing \eqref{eq:identity-local} over all 
$K\in\cT$ gives
\begin{eqnarray*}
\label{eq:rel-der}
\eta^2
  &=&\norm{\mu^{1/2} \vE - \mu^{-1/2}\curlt\ve} ^2
  +\norm{\b^{-1/2}\curlt \vE+ \b^{1/2}\ve}^2\\[2mm]
  &=& \enorm{\ve}_{\mu,\b}^2
 +\enorm{\vE}_{\beta,\mu}^2
 -2(\vE,\,\curlt\ve)+ 2(\curlt \vE,\,\ve).
\end{eqnarray*}
Now, \eqref{eq:rel-global} follows from the fact that 
\[
-(\vE,\,\curlt\ve)+ (\curlt \vE,\,\ve)=0.
\]
Lastly, the global upper bound for the locally recovered $\wt{\eta}$ follows from 
the fact that $\vu_{_\cT}$ and $\bsig_{_\cT}$ are the solutions to the following 
global problem:
\begin{equation}
\label{eq:pb-mf-min}
\inf_{\substack{\btau \in \vHcrlb{N} \cap \ND^{k}\\
 \vv\in \vHcrlb{D} \cap \ND^{k}}}
\norm{\mu^{-1/2} \left(\mu\,\btau- \curlt\vv\right)}^2
+\norm{\b^{-1/2}\left(\curlt \btau + \b\vv - \vf\right)}^2.
\end{equation}
As a result, $\wt{\eta} \geq \eta$ which is the global minimum achieved in the 
finite element spaces. This completes the proof of the theorem. 
\end{proof}

\begin{remark}
In Theorem \ref{th:rel} it is assumed that the boundary data are admissible so that 
they can be represented as tangential traces of the finite element space $\ND^k$. If 
this assumption is not met, it can be still assumed that divergence-free extension 
$\vu_{\vg_D}$ of its tangential trace $\vg_{_D}$ to each boundary tetrahedron $K$ on 
$\G_D$ is at least $\vH^{1/2+\d}$-regular ($\d>0$), and $\curlt \vu_{\vg_D}\in 
\vL^p(K)$ as well ($p>2$), so that the conventional edge interpolant is well-defined 
(e.g. see \cite{Monk} Chapter 5). When the same assumption is applied to $\bsig$ and 
$\curlt\bsig$, the reliability bound derived by \eqref{eq:rel-der} still holds (for 
notations please refer to Appendix \ref{appendix}):
\[
\eta^2 = \enorm{\ve}_{\mu,\b}^2
 +\enorm{\vE}_{\beta,\mu}^2
+\dualp{\ve\cross \vn}{\pi_{\top, D}\vE}_{\G_D} + 
\dualp{\pi_{\top, N}\ve}{\vE\cross \vn}_{\G_N}.
\]
Using the fact that $\vu$ and $\bsig$ are approximated by the conventional edge 
interpolants $\cI \vu$ on $\G_D$ and $\cI \bsig$ on $\G_N$ respectively yields:
\[
\begin{aligned}
&\dualp{\ve\cross \vn}{\pi_{\top, D}\vE}_{\G_D} 
+ \dualp{\pi_{\top, N}\ve}{\vE\cross \vn}_{\G_N}
\\
\leq & 
\norm{\g_{\top,D}(\vu - \cI \vu)}_{1/2,\mu,\b,\G_{D}}
\norm{\pi_{\top,D} \vE}_{-1/2,\b,\mu,\G_D} 
\\
&+
\norm{\pi_{\top,N} \ve}_{-1/2,\mu,\b,\G_N} 
\norm{\g_{\top,N}(\bsig - \cI \bsig)}_{1/2,\b,\mu,\G_{N}}.
\end{aligned}
\]
By the interpolation estimates for boundary elements together with the weighted 
trace inequalities \eqref{eq:tr} from Appendix \ref{appendix}, the reliability 
constant is not harmed if the interface singularity does not touch the 
boundary.
\end{remark}

\subsection{Relation to Duality Method}
\label{sec:dual}
A posteriori error estimation by the duality method for the diffusion-reaction 
problem was studied by Oden, Demkowicz, Rachowicz, and Westermann in \cite{Oden89}.
In this section, we describe the duality method for the $\vhcrl$ problem and its 
relation with the estimator $\eta$ defined in (\ref{eq:eta}). 

To this end, define the energy and complimentary functionals by
\begin{eqnarray*}
\cJ(\vv) &=&  \frac{1}{2} A_{\mu,\beta}(\vv,\vv) - f_{_N}(\vv)  \\[2mm]
 \mbox{and}\quad \cJ^*(\btau) &=&  -\frac{1}{2}(\mu\,\btau,\,\btau) 
   - \frac{1}{2} \binprod{\b^{-1}(\vf - \curlt\btau)}{\vf-\curlt\btau} 
   -\dualp{\vg_{_D}}{\btau}_{\G_D},
\end{eqnarray*}
respectively. Then problems (\ref{eq:pb-ef-weak}) and (\ref{eq:pb-mf-weak}) are 
equivalent to the following minimization and
maximization problems:
 \[
  \cJ(\vu) = \inf_{\vv \in \vHcrlb{D}} \cJ(\vv)
   \quad\mbox{and}\quad
   \cJ^*(\bsig) = \sup_{\btau \in \vHcrlb{N}} \cJ^*(\btau),
   \]
respectively. By the duality theory for a lower semi-continuous convex functional 
(see e.g. \cite{Ekeland-Temam}), we have
\[
 \cJ(\vu) = \cJ^*(\bsig) 
 \quad\mbox{and}\quad
 \bsig = \mu^{-1}\nabla\times\vu.
\]
   
A simple calculation gives that the true errors of the finite element approximations 
in the ``energy'' norm can be represented by the difference between the functional 
values as follows:
\begin{equation}\label{eq:err-fun}
\enorm{\vu-\vu_{_\cT}}_{\mu,\b}^2 = 2\Big( \cJ(\vu_{_\cT}) - \cJ(\vu)\Big)\,
\mbox{ and }\,
  \enorm{\bsig-\bsig_{_\cT}}_{\beta,\mu}^2 = 2\Big( \cJ^*(\bsig) - 
  \cJ^*(\bsig_{_\cT}) \Big).
\end{equation}
Hence, the ``energy'' error in the finite element approximation is bounded above by 
the estimator $\eta$ defined in (\ref{eq:eta}) (and the locally-recovered 
$\wt{\eta}$ as well):
\begin{eqnarray*}
 \enorm{\vu-\vu_{_\cT}}_{\mu,\b}^2 
 &=& 2\Big(\cJ(\vu_{_\cT}) - \cJ(\vu) \Big)
  = 2 \Big(\cJ(\vu_{_\cT}) - \cJ^*(\bsig) \Big)\\[2mm]
 &\leq & 2\Big( \cJ(\vu_{_\cT}) - \cJ^*(\bsig_{_\cT}) \Big)= \eta^2,
\end{eqnarray*}
where the last equality is obtained by evaluating $ 2\Big(\cJ(\vu_{_\cT}) - 
\cJ^*(\bsig_{_\cT})\Big)$ 
through integration by parts. Note that the above calculation indicates
\[
\eta^2  = \enorm{\vu-\vu_{_\cT}}_{\mu,\b}^2 + 
\enorm{\bsig-\bsig_{_\cT}}_{\beta,\mu}^2 = 
2\Big( \cJ(\vu_{_\cT})  - \cJ^*(\bsig_{_\cT}) \Big),
\]
which leads us back to the identity on the global reliability in
 \eqref{eq:rel-global}.

\section{Numerical Examples}
\label{sec:numex}
In this section, we present numerical results for interface problems, i.e., the 
problem parameters
$\mu$ and $\beta$ in (\ref{eq:pb-ef}) are piecewise constants with respect to a 
partition of the domain $\ol{\Omega}=\cup^n_{i=1} \ol{\Omega}_i$.
Assume that interfaces $\fI=\{\p\Om_i\cap\p\Om_j : i,j=1,\,...,\,n\}$ do not cut
through any element $K\in\cT$. The $\vu_{_\cT}$ is solved in $\ND^0$, and the 
$\bsig_{_\cT}$ is recovered in $\ND^0$ as well.

The numerical experiments are prepared using \texttt{delaunayTriangulation} in MATLAB
for generating meshes, L. Chen's iFEM (\cite{Chen.L2008c}) 
for the adaptively refining procedure, the \texttt{matlab2hypre} interface in BLOPEX 
(\cite{Knyazev07}) for converting sparse matrices, and MFEM (\cite{mfem-library}) to 
set up the serial version of Auxiliary-space Maxwell Solver (AMS) in \emph{hypre} 
(\cite{Falgout02}) as preconditioners. We compare numerical results generated by
adaptive finite element method using following error estimators:
 \begin{itemize}
 \item[(i)] the new indicator $\eta_{_{\text{New},K}}$ defined in~\eqref{eq:etaK}, 
 and its locally-recovered sibling $\wt{\eta}_{_{\text{New},K}}$ defined 
 in~\eqref{eq:etaKt}.
 \item[(ii)] the residual-based indicator $\eta_{_{\text{Res},K}}$ introduced in
 \cite{Hiptmair2000} with the appropriate weights for piecewise constant 
 coefficients defined in  \cite{Cai15}:
  \[
  \begin{aligned}
  \eta_{_{\text{Res},K}}^2
&= \mu_K h_K^2\norm{\vf - \b \vu_{_\cT} - \curlt(\mu^{-1} \curlt 
\vu_{_\cT})}_{\vL^2(K)}^2 + 
\beta_K^{-1} h_K^2\norm{\divv(\b \vu_{_\cT}- \vf)}_{\vL^2(K)}^2
\\[2mm]
&  + \sum_{F\in \cF_h(K)} \frac{h_F}{2} \left( \b_F^{-1} 
\norm{\jump{\b\vu_{_\cT}\cdot \vn_F}{F}}_{L^2(F)}^2 + 
\mu_F\norm{\jump{(\mu^{-1} 
\curlt \vu_{_\cT})\cross \vn}{F}}_{\vL^2(F)}^2 \right),
\end{aligned}
  \]
\item[(iii)] the recovery-based indicator $\eta_{_{\text{Rec},K}}$ presented in 
 \cite{Cai15}:
  \[
   \begin{aligned}
  \eta_{_{\text{Rec},K}}
  &= \mu_K h_K^2\norm{\vf - \b \vu_{_\cT} - \curlt(\mu^{-1} \curlt  
  \vu_{_\cT})}_{\vL^2(K)}^2 
  +  \norm{\b^{-1/2}\btau_{_\cT}-\b^{1/2} \vu_{_\cT}  }_{\vL^2(K)}^2
\\[2mm]
&  \qquad +  \norm{\mu^{1/2}\bsig_{_\cT}-\mu^{-1/2} \curlt  \vu_{_\cT}  
}_{\vL^2(K)}^2,
\end{aligned}
  \]
where $\bsig_{_\cT} \in \ND_0$ and $\btau_{_\cT} \in \BDM_1$ are the $L^2$ 
recoveries of  $\mu^{-1}\curlt  \vu_{_\cT}$ and $\beta \vu_{_\cT}$, respectively.
 \end{itemize}
 
In our computation, the energy norms 
 \[
 \enorm{\vv}_{\mu,\b}
 \quad\mbox{and}\quad
 \enorm{(\vv,\,\btau)}=\left(\enorm{\vv}_{\mu,\b}^2+\enorm{\btau}_{\beta,\mu}^2\right)^{1/2}
 \]
are used for the estimators $\eta_{_\text{Res}}$ and $\eta_{_\text{Rec}}$
and the estimator $\eta_{_\text{New}}$, respectively.
The respective relative errors and effectivity indices are computed at each 
iteration by
\[
\text{rel-error} := 
\frac{\enorm{\vu - \vu_{_{\cT,\#\text{Iter}}}}_{\mu,\b}}
{\enorm{\vu}_{\mu,\b}}
\quad\mbox{and}\quad
\text{eff-index} := \frac{\eta_{_{\# \text{Iter}}}}
{\enorm{\vu - \vu_{_{\cT,\#\text{Iter}}}}_{\mu,\b}}
\]
for the estimators $\eta_{_\text{Res}}$ and $\eta_{_\text{Rec}}$ and by
\[
\text{rel-error} := \frac{\xi}
{\enorm{(\vu,\,\bsig)}} \quad
\mbox{and}\quad 
\text{eff-index} := \frac{\eta_{_{\# \text{Iter}}}}
{\xi}
\]
for the estimator $\eta_{_\text{New}}$ and $\wt{\eta}_{_\text{New}}$, where 
$\xi = \enorm{(\vu-\vu_{_{\cT,\#\text{Iter}}},\, 
\bsig-\bsig_{_{\cT,\#\text{Iter}}})}$.
In all the experiements, the lowest order N\'{e}d\'{e}lec element space is used, 
and, hence, 
the optimal rate of convergence for the adaptive algorithm is 
$O(\texttt{\#DoF}^{-1/3})$.

\smallskip

\textbf{Example 1}: This is an example similar to that in 
\cite{Cai15,Zou-2} with a few additions and tweaks, in which
the Kellogg intersecting interface problem is adapted to the $\vhcrl$-problem. 
The computational domain is a slice 
along $z$-direction: $\Om =(-1, 1)^2\cross (-\d,\d) $ with $\d = 0.25$. 
Let $\a$ be a piecewise constant given by
\[
\a= 
\begin{cases} 
R & \text{ in } (0, 1)^2\cross(-\d,\d)  \cup (−1, 0)^2\cross (-\d,\d) ,
\\
1 & \text{ in } \Om\backslash \Big((0, 1)^2\cross (-\d,\d)  
\cup (−1, 0)^2\cross (-\d,\d) \Big).
\end{cases}
\]
The 
exact solution $\vu$ of (\ref{eq:pb-ef}) is given in cylindrical coordinates 
$(r,\theta,z)$:
\[
\vu =\nabla \psi =  \nabla \bigl(r^{\g} \phi(\theta)\bigr) \in 
\vH^{\gamma-\epsilon}(\Omega)
\,\,\mbox{ for any }\,\, \epsilon >0,
\]
where $\phi(\theta) $ is a continuous function defined by
\[
\phi(\theta) = 
\begin{cases}
\cos\big((\pi/2 - \s)\g\big)\cdot \cos\big((\theta -\pi/2 + \rho)\g\big),
&\text{for } 0 \leq  \theta \leq \pi/2,
\\
\cos(\rho\g) \cdot \cos\big((\theta - \pi + \s)\g\big), 
&\text{for }\pi/2 \leq  \theta \leq \pi,
\\
\cos(\s\g) \cdot \cos\big((\theta - \pi - \rho)\g\big), 
&\text{for }\pi \leq \theta\leq 3\pi/2,
\\
\cos\big((\pi/2-\rho)\g\big) \cdot \cos\big((\theta - 3\pi/2 - \s)\g\big),
&\text{for }3\pi/2 \leq \theta\leq 2\pi.
\end{cases}
\]
Here we set parameters 
to be
\[
 \g = 0.5, \quad R \approx 5.8284271247461907, \quad
 \rho = \pi/4, \,\text{ and }\, \s \approx −2.3561944901923448.
\]
The initial mesh is depicted in Figure 
\ref{fig:ex1-init} which is aligned with four interfaces.
\begin{figure}[h]
\centering
\includegraphics[width=0.35\textwidth]{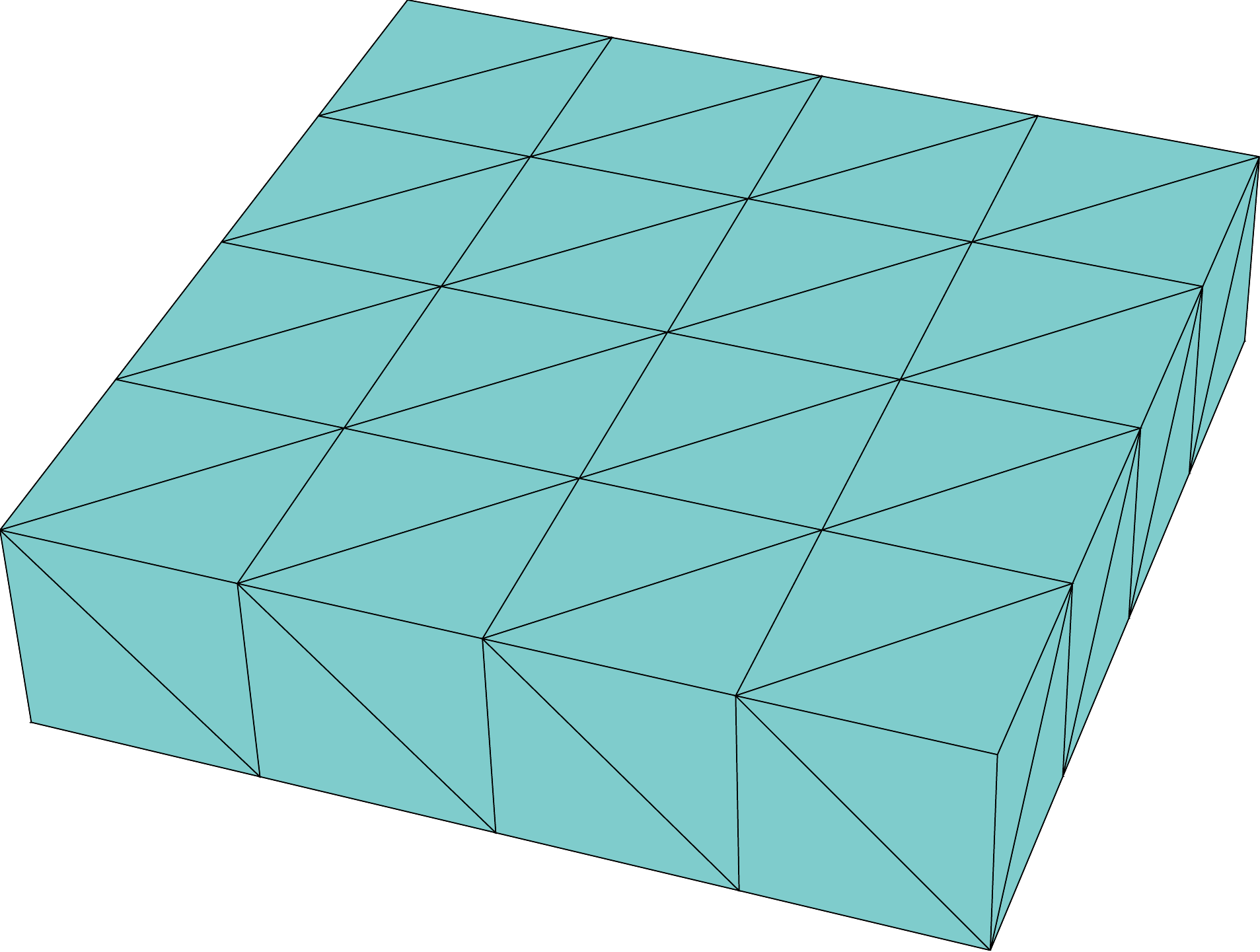}
\caption{Initial Mesh of Example 1}
\label{fig:ex1-init}
\end{figure}

\begin{figure}[h]
\centering
\subfloat[Refined mesh based on $\eta_{_{\text{Res},K}}$ cut on $z\!=\!0$]
{
\includegraphics[width=0.3\textwidth]{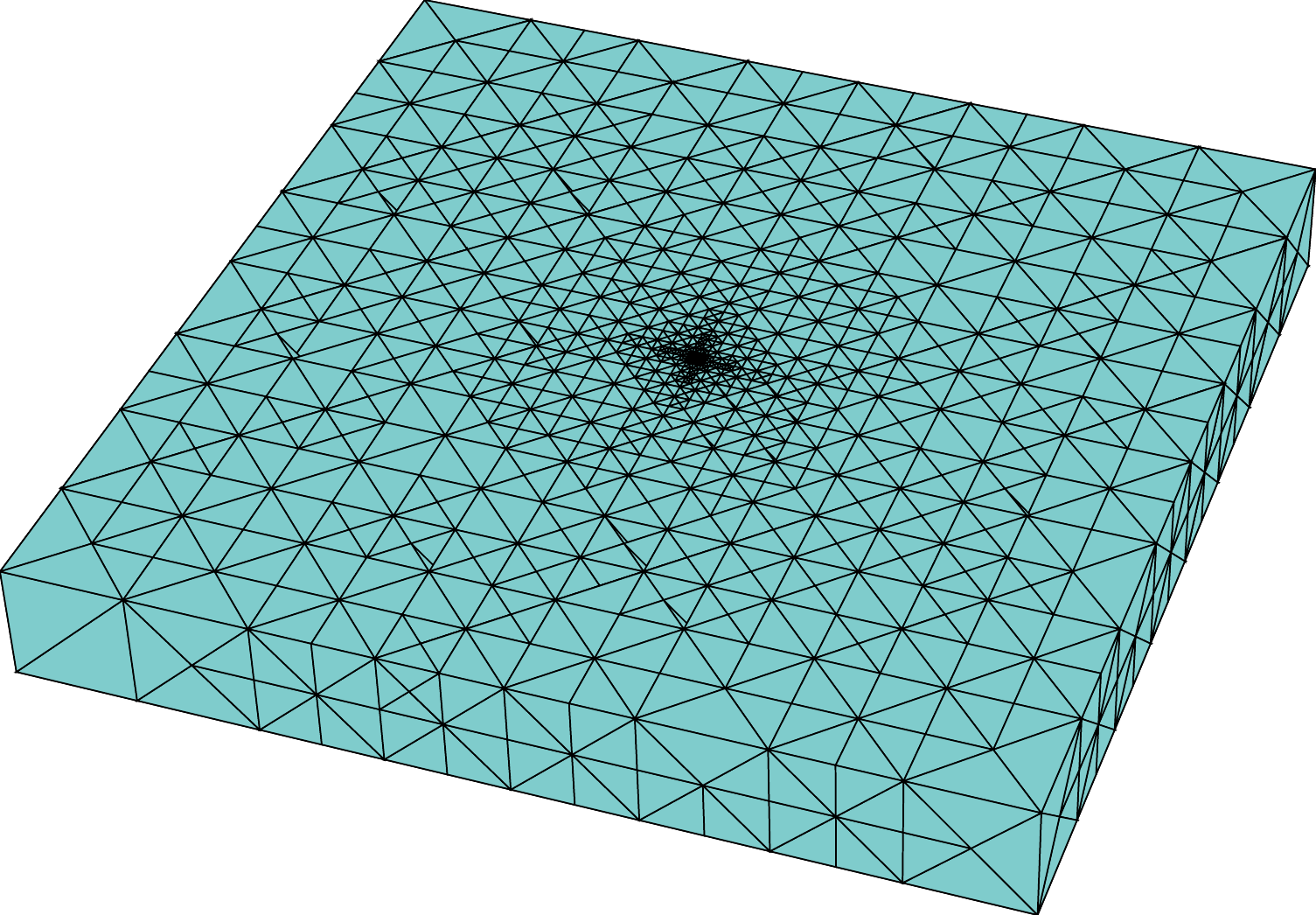}
}
\hskip0.5cm
\subfloat[Refined mesh based on $\eta_{_{\text{New},K}}$ cut on $z\!=\!0$]
{
\includegraphics[width=0.3\textwidth]{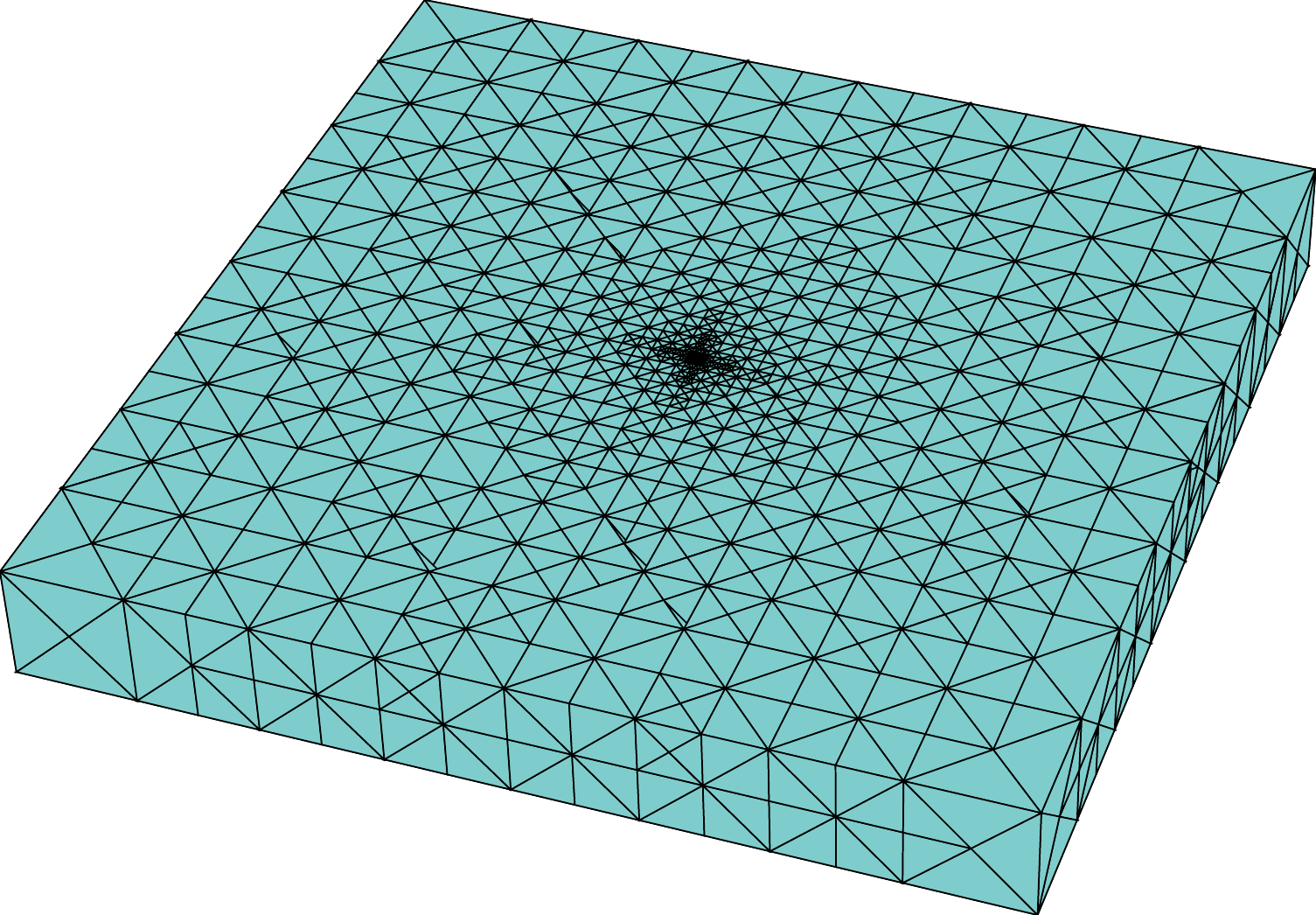}
}
\caption{Mesh results of Example 1, $\divv \vf= 0$} 
\label{fig:ex1-hdiv-mesh}
\end{figure}

It is easy to see that the exact solution of the auxiliary problem in 
(\ref{eq:pb-mf}) for this example is 
$\bsig = 
\mu^{-1}\curlt\vu={\bf 0}$. Hence, the true error for the finite element 
approximation to  (\ref{eq:pb-mf})
is simply the energy norm of the finite element solution $\bsig_{_\cT}$ defined in 
(\ref{eq:pb-mf-weak})
\[
\enorm{ \bsig - \bsig_{_\cT}}_{\mu,\b}^2 
= \enorm{\bsig_{_\cT}}_{\mu,\b}^2
= \norm{\b^{-1/2} \curlt \bsig_{_\cT}}^2 +
\norm{\mu^{1/2} \bsig_{_\cT}}^2.
\]

In the first experiment, we choose the coefficients $\mu= 1$ and $\b = \a$. 
This choice enables that $\divv\vf = 0$, i.e.,
$\vf\in\vHdiv$, and that $\vu$ satisfies the $\b$-weighted normal 
continuity:
\begin{equation}
\label{eq:cont-normal}
\jump{\b\vu\cdot \vn}{F} = 0
\end{equation}
for any surface $F$ in the domain $\Omega$.
This is the prerequisite for establishing efficiency and reliability bounds in 
\cite{Hiptmair2000} and \cite{Cai15} and the base for recovering 
$\btau_h$ in $\BDM_1\subset \vHdiv$ in \cite{Cai15}. The quasi-monotonicity 
assumption is not met in this situation (for the 
analysis of the quasi-monotonicity affects the robustness of the estimator for 
$\vhcrl$ problems, please refer to \cite{Cai15}).

The meshes generated by $\eta_{_{\text{New},K}}$, 
$\eta_{_{\text{Res},K}}$, and $\eta_{_{\text{Rec},K}}$ are almost the same 
(see Figure \ref{fig:ex1-hdiv-mesh}). 
In terms of the convergence, we observe that the 
error estimator $\eta_{_{\text{New}}}$ exhibits  asymptotical
exactness. This is impossible for the error estimators in  
\cite{Hiptmair2000} and \cite{Cai15}  because of the presence of
the element residuals. Table 1 shows that the number of the DoF for the 
$\eta_{_\text{New}}$ is about $30\%$ less than those of the other two estimators 
while achieving a better accuracy.
As the reliability of the estimator does not depend on the quasi-monotonicity of the 
coefficient, the rate of the convergence is not hampered by checkerboard pattern of 
the $\b$.


\begin{table}[h]
\caption{Estimators Comparison, Example 1, $\divv \vf = 0$} 
\centering  
\begin{tabular}{|c|c|c|c|c|c|c|} 
\hline
					& \# Iter & \# DoF & error    & rel-error& $\eta$   & eff-index  
\\ \hline
$\eta_{_\text{Res}}$ & $27$ & $181324$ & $0.0428$ & $0.0494$ & $0.0953$ & $2.226$ 
\\ 
\hline
$\eta_{_\text{Rec}}$ & $27$ & $187287$ & $0.0421$ & $0.0486$ & $0.0428$ & $1.041$  
\\ 
\hline
$\eta_{_\text{New}}$ & $24$ & $127857$ & $0.0411$ & $0.0473$ & $0.0405$ & $0.985$  
\\
\hline
$\wt{\eta}_{_\text{New}}$ & $25$ & $129564$ & $0.0418$ & $0.0482$ & $0.0479$ & 
$1.147$  
\\
\hline
\end{tabular}
\label{table:ex1-1}
\end{table}

In the second experiment, we choose that $\mu = \b = 1$. Due to the fact that 
the normal component of $\vu=\nabla \psi$ is discontinuous across the interfaces, 
the exact solution $\vu$ 
does not satisfy the usual $\b$-weighted normal continuity \eqref{eq:cont-normal}, 
i.e.,
\[
\jump{\b\vu\cdot \vn}{F} \neq 0.
\]
This leads to a right-hand side $\vf = \b\vu = \nabla \psi$ that is not in 
$\vHdiv$ in the primary problem. Even though the $\vhdiv$-continuity is 
required for establishing the reliability and efficiency of the existing 
residual-based 
and recovery-based estimators, the old residual-based and recovery-based estimators 
may 
still be used if $\divv \vf\at{K}\in \vL^2(K)$ for all $K\in\cT$.
Therefore, we implement all three estimators in this experiment as well.

\begin{table}[h!]
\caption{Estimators Comparison, Example 1, $\vf\notin \vHdiv$} 
\centering  
\begin{tabular}{|c|c|c|c|c|c|c|} 
\hline
 					 & \# Iter &\# DoF & error	  & rel-error& $\eta$   & eff-index 
\\
\hline
$\eta_{_\text{Res}}$ & $31$ & $247003$ & $0.0337$ & $0.0507$ & $0.115$  & $3.420$   
\\
\hline
$\eta_{_\text{Rec}}$ & $24$ & $218497$ & $0.0355$ & $0.0534$ & $0.0559$ & $1.577$   
\\
\hline
$\eta_{_\text{New}}$ & $22$ & $99215$  & $0.0342$ & $0.0514$ & $0.0329$ & $0.964$  
\\
\hline
$\wt{\eta}_{_\text{New}}$ & $24$ & $103419$  & $0.0338$ & $0.0508$ & $0.0434$ & 
$1.283$  
\\
\hline
\end{tabular}
\label{table:ex1}
\end{table}

\begin{figure}[h!]
\centering
\subfloat[Refined Mesh based on $\eta_{_{\text{Res},K}}$ cut on $z=0$]
{
\includegraphics[width=0.25\textwidth]{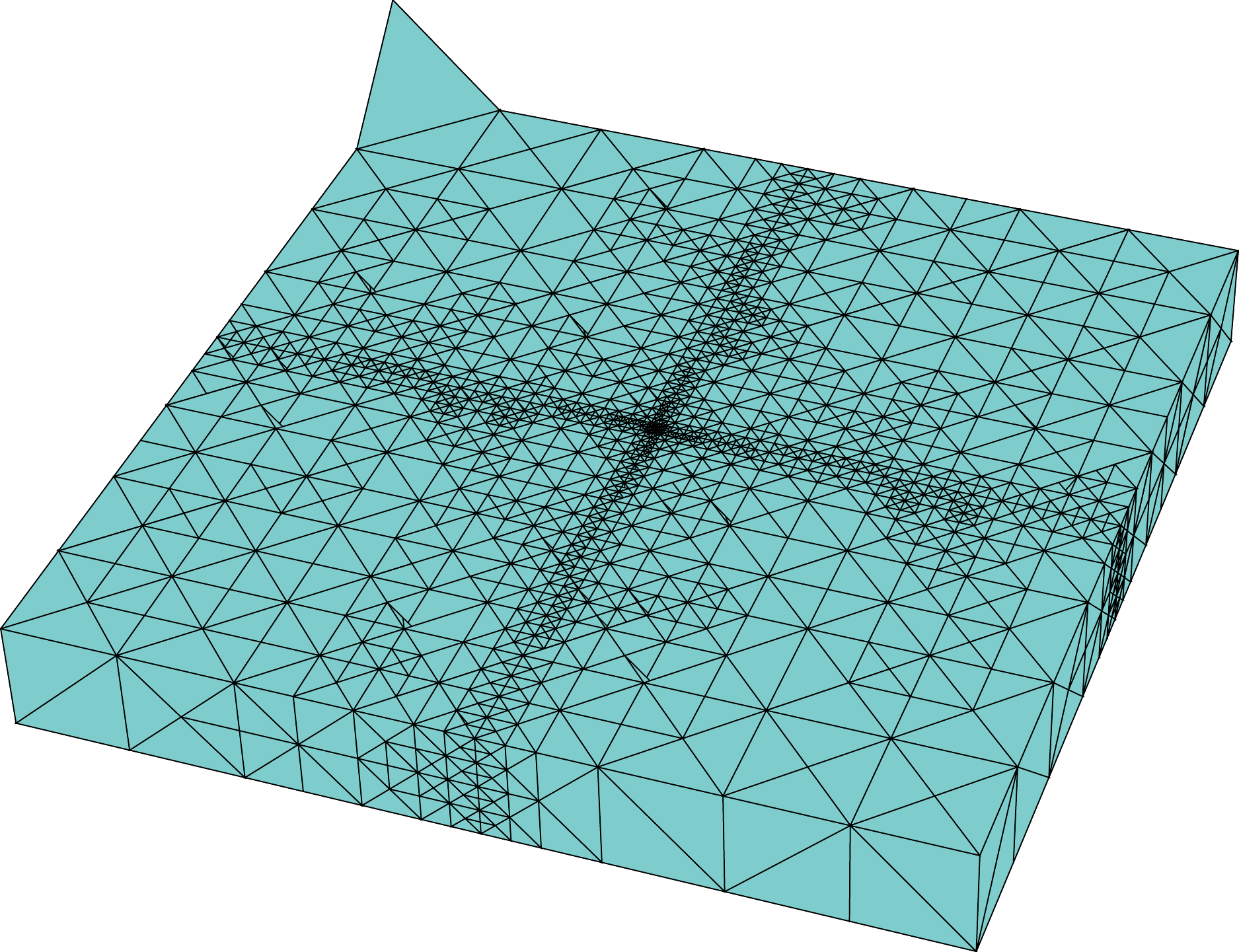}
}
\hskip0.5cm
\subfloat[Refined Mesh based on $\eta_{_{\text{Rec},K}}$ cut on $z=0$]
{
\includegraphics[width=0.25\textwidth]{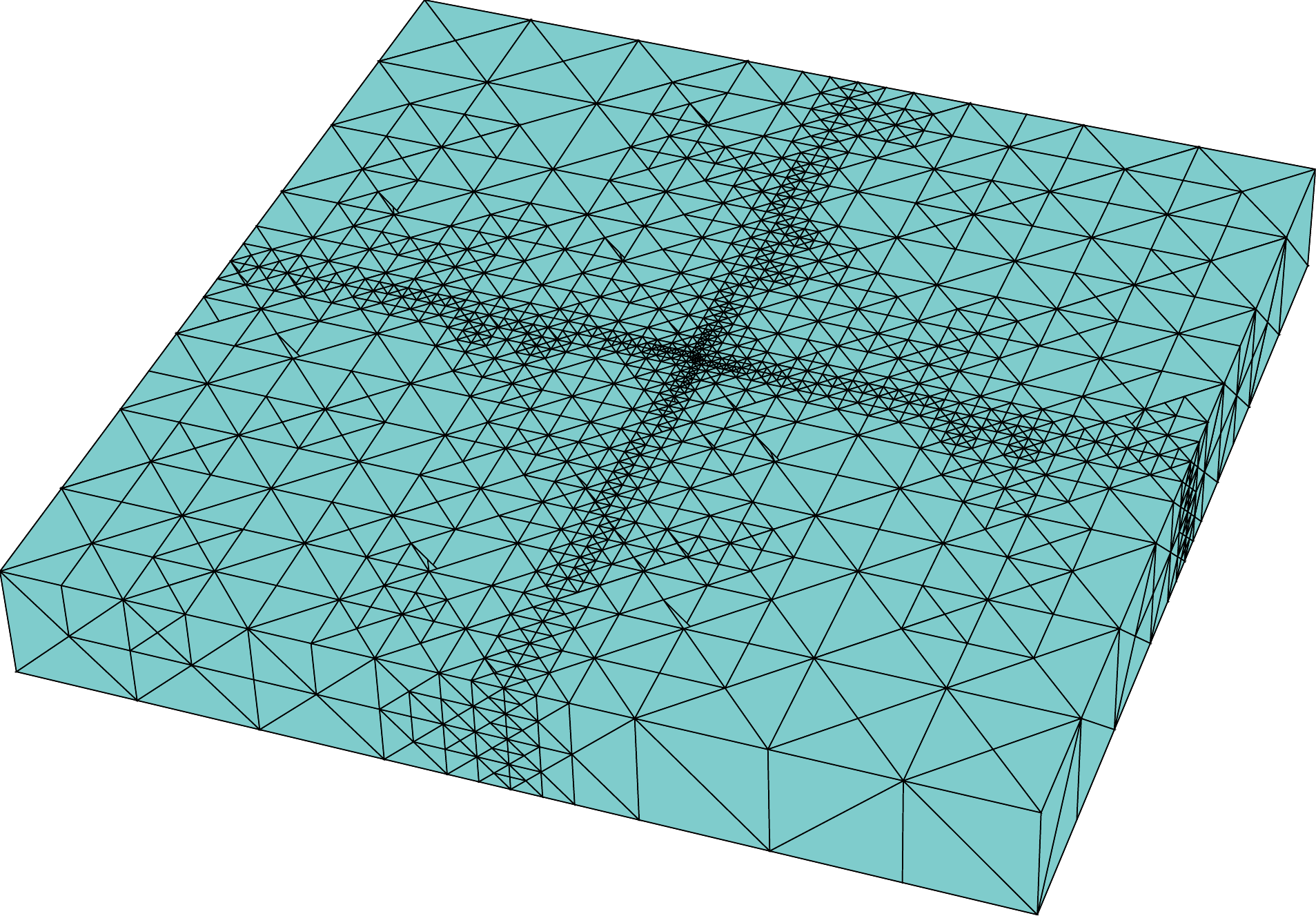}
}
\quad
\subfloat[Refined Mesh based on $\eta_{_{\text{New},K}}$ cut on $z=0$]
{
\includegraphics[scale=0.2]{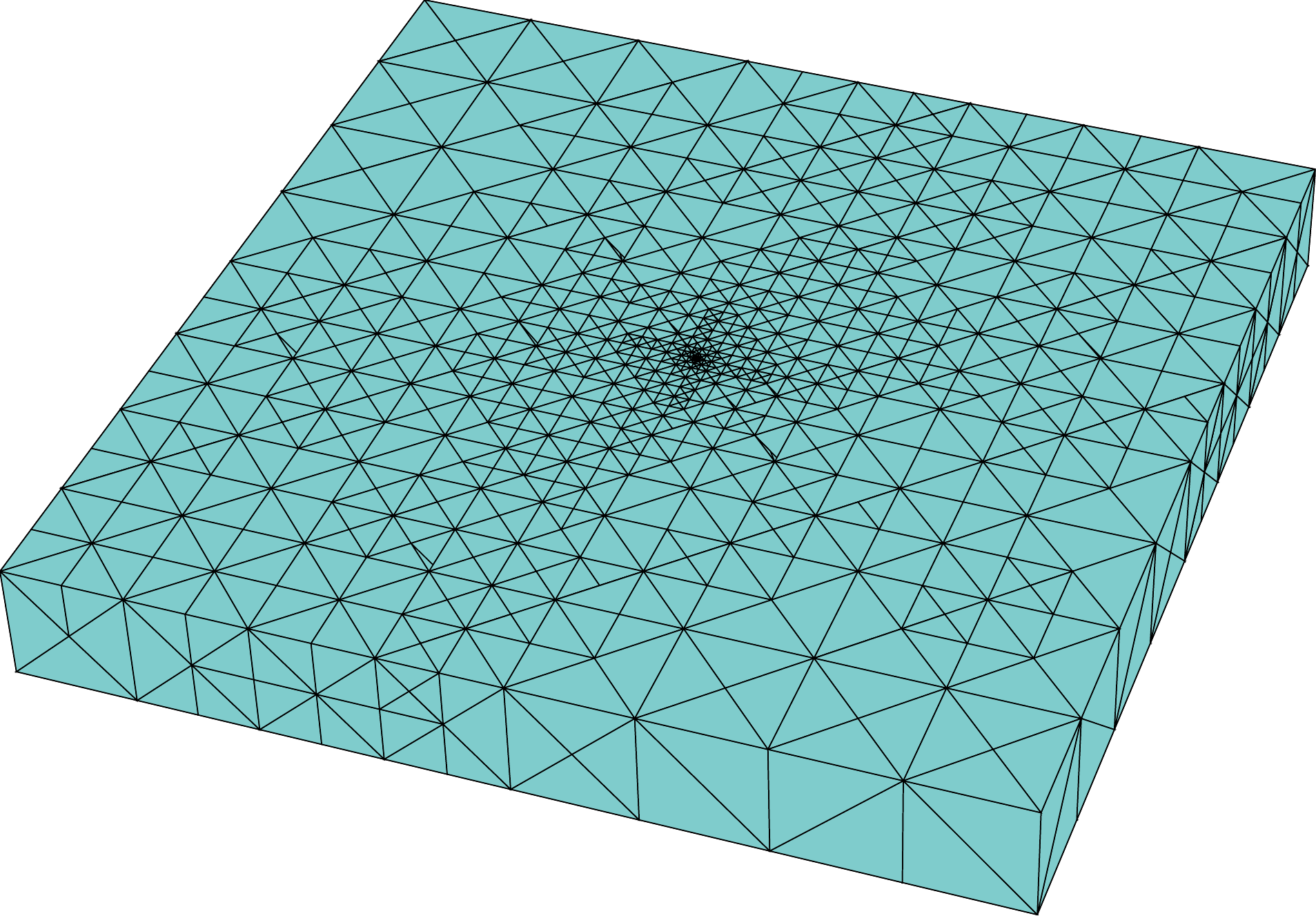}
}
\caption{Mesh Result of Example 1, $\vf\notin \vHdiv$}
\label{fig:ex1-mesh}
\end{figure}

\begin{figure}[h!]
\centering
\subfloat[Convergence of $\eta_{_{\text{Res}}}$]
{
\includegraphics[width=0.28\textwidth]{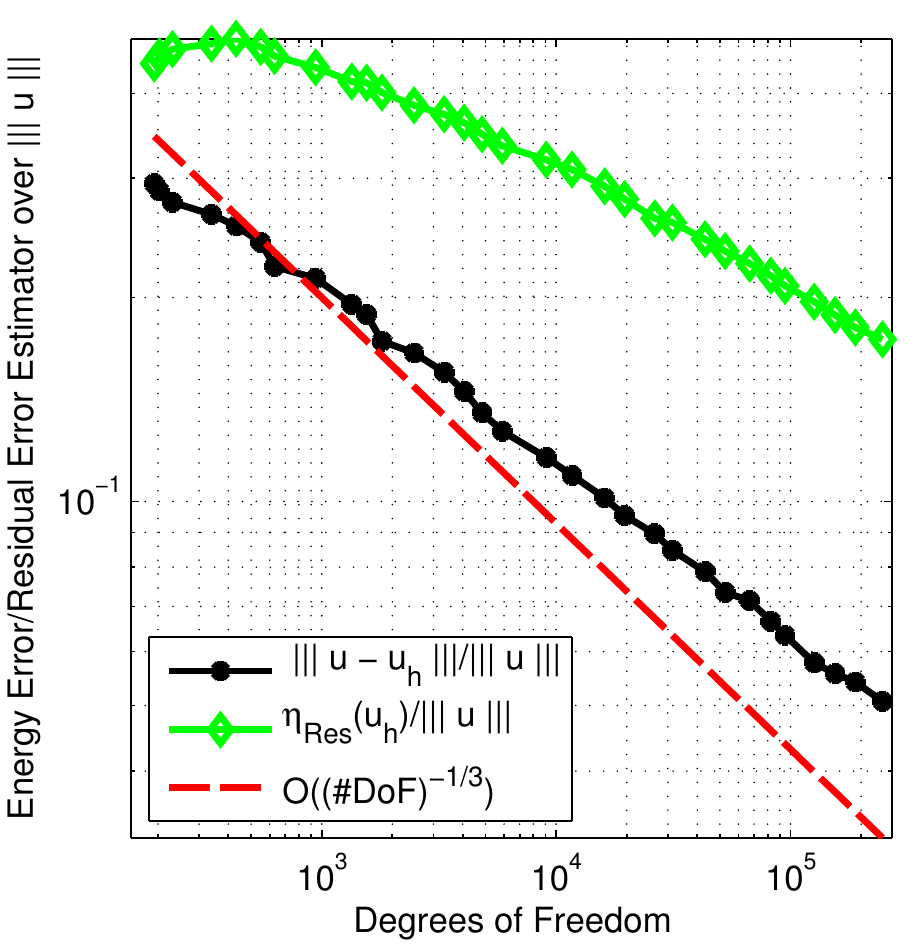}
}
\quad
\subfloat[Convergence of $\eta_{_{\text{Rec}}}$]
{
\includegraphics[width=0.28\textwidth]{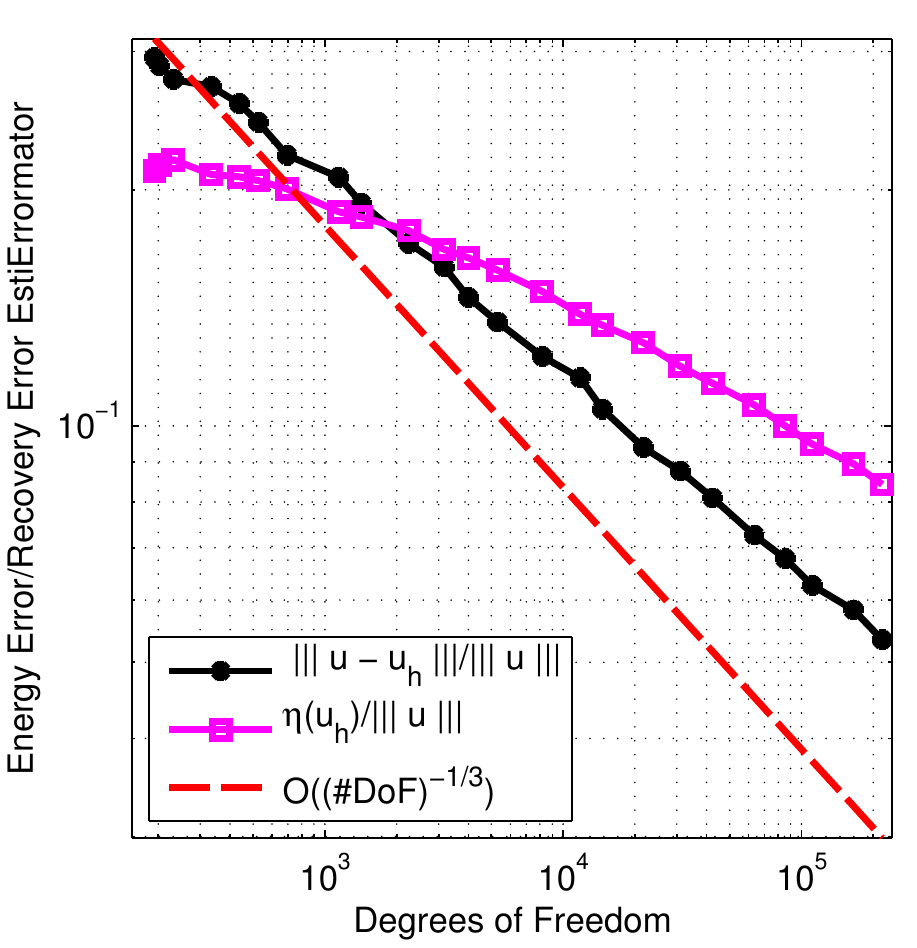}
}
\quad
\subfloat[Convergence of $\eta_{_{\text{New}}}$]
{
\includegraphics[width=0.28\textwidth]{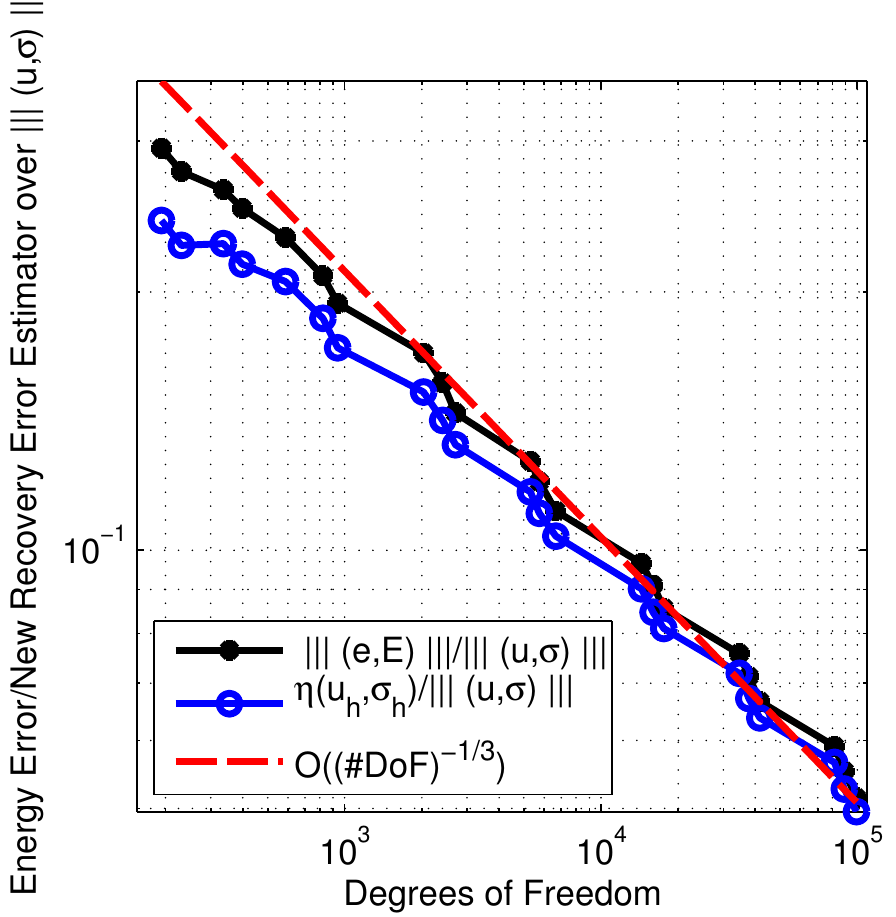}
}
\caption{Convergence Results of Example 1, $\vf\notin \vHdiv$}
\label{fig:ex1-conv}
\end{figure}

For the new estimator $\eta_{_\text{New}}$, it is shown in Figure~4 that the rate of 
convergence is optimal and that the relative true error and the relative estimator 
is approximately equal. 

Table~2 indicates that the number of the degrees of freedom using the 
$\eta_{_\text{New}}$
is less than half of those using the other two estimators. This is confirmed by the 
meshes depicted in Figure~3
where both the $\eta_{_\text{Res}}$ and $\eta_{_\text{Rec}}$ over-refine meshes 
along 
the interfaces, where there are no errors. Such inefficiency of the estimators 
$\eta_{_\text{Res}}$ and $\eta_{_\text{Rec}}$ is also shown in Figure~4 through the 
non-optimal rate of convergence. Moreover, Figure~4 shows 
that both the $\eta_{_\text{Res}}$ and $\eta_{_\text{Rec}}$ are not reliable because 
the slopes of the relative error and the relative estimator are different. The main 
reason for this failure is due to the normal 
jump term $h_F^{1/2}\norm{\jump{\b\vu_{_\cT}\cdot \vn}{F}}_{F}$ along the 
interfaces, 
which is bigger than the true error.

\hspace{0.1in}

\textbf{Example 2}: 
In this example, the performance of the estimators for solenoidal vector field is 
investigated. Like the first example, the coefficients distribution across 
the computational domain is in a checkerboard pattern, not satisfying the 
quasi-monotonicity either.
The computational domain $\overline{\Om} = [-1,1]^3= \overline{\Om}_1\cap 
\overline{\Om}_0$, and $\mu$ is given by:
\[
\mu= 
\begin{cases} 
a & \text{ in } \Om_1,
\\
1 & \text{ in } \Om_0.
\end{cases}
\]
where $\Om_1 = \{(x,y,z)\in \RR^3: xyz>0 \} \cap \Om$ and $\Om_0 = 
\{(x,y,z)\in \RR^3: xyz\leq 0 \} \cap \Om$.
The true solution is given by
\[
\vu = \mu\big(\sin(\pi yz), \sin(\pi xz), \sin(\pi x y)\big).
\]

\begin{table}[h]
\caption{Estimators Comparison, Example 2, $\divv \vf=0$} 
\centering  
\begin{tabular}{|c|c|c|c|c|c|c|} 
\hline
& \# Iter& \# DoF  & error & rel-error & $\eta$  & eff-index 
\\
\hline
$\eta_{_\text{Res}}$  & $34$ & $118740$ & $0.368$ & $0.0699$ & $0.793$ & $2.157$  
\\
\hline
$\eta_{_\text{Rec}}$  & $22$ & $120550$ & $0.365$ & $0.0677$ & $0.644$ & $1.762$   
\\
\hline
$\eta_{_\text{New}}$  & $25$ & $50080$  & $1.500$ & $0.0681$ & $1.490$ & $0.993$  
\\
\hline
$\wt{\eta}_{_\text{New}}$  & $26$ & $51745$  & $1.526$ & $0.0693$ & $1.763$ & 
$1.155$  
\\
\hline
\end{tabular}
\label{table:ex3-hdiv}
\end{table}

In the first experiment, the $\b$ is given as follows:
\[
\b= 
\begin{cases} 
a^{-1} & \text{ in } \Om_1,
\\
1 & \text{ in } \Om_0,
\end{cases}
\]
where $a= 10^{-3}$. This choice makes $\nabla\cdot \vf=0$, and the true 
solution $\vu$ satisfies both the 
tangential continuity and the normal continuity 
\eqref{eq:cont-normal}. Similarly to the first example, the meshes refined using 
three error estimators exhibit no significant difference. Yet, the new estimator 
again shows the asymptotically exactness behavior as Example 1 (see Figure 
\ref{fig:ex3-conv}), and requires much less degrees of freedom to 
achieve the same level of relative error. 
For the results please refer to Table \ref{table:ex3-hdiv}.

\begin{table}[h]
\caption{Estimators Comparison, Example 2, $\vf\notin \vHdiv$} 
\centering  
\begin{tabular}{|c|c|c|c|c|c|c|} 
\hline
					& \# Iter& \# DoF  & error & rel-error & $\eta$  & eff-index  
\\
\hline
$\eta_{_\text{Res}}$ & $31$ & $153352$ & $4.495$ & $0.0699$ & $18.165$ & $4.041$  
\\
\hline
$\eta_{_\text{Rec}}$ & $29$ & $159194$ & $4.538$ & $0.0664$ & $9.857$ & $2.172$   
\\
\hline
$\eta_{_\text{New}}$ & $24$ & $49894$  & $5.263$ & $0.0684$ & $5.254$ & $0.998$   
\\
\hline
$\wt{\eta}_{_\text{New}}$ & $25$ & $57338$  & $4.945$ & $0.0642$ & $6.087$ & 
$1.231$   
\\
\hline
\end{tabular}
\label{table:ex3-1}
\end{table}

\begin{figure}[h!]
\centering
\subfloat[Convergence of $\eta_{_{\text{Res}}}$]
{
\includegraphics[width=0.28\textwidth]{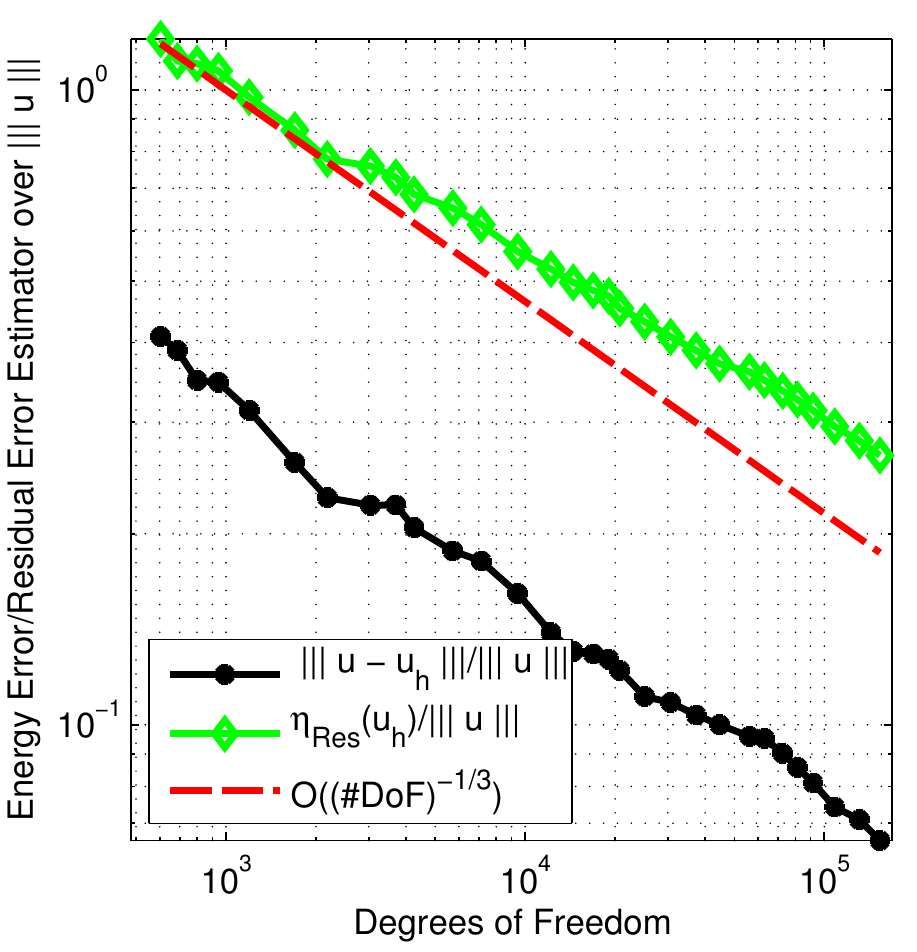}
}
\quad
\subfloat[Convergence of $\eta_{_{\text{Rec}}}$]
{
\includegraphics[width=0.28\textwidth]{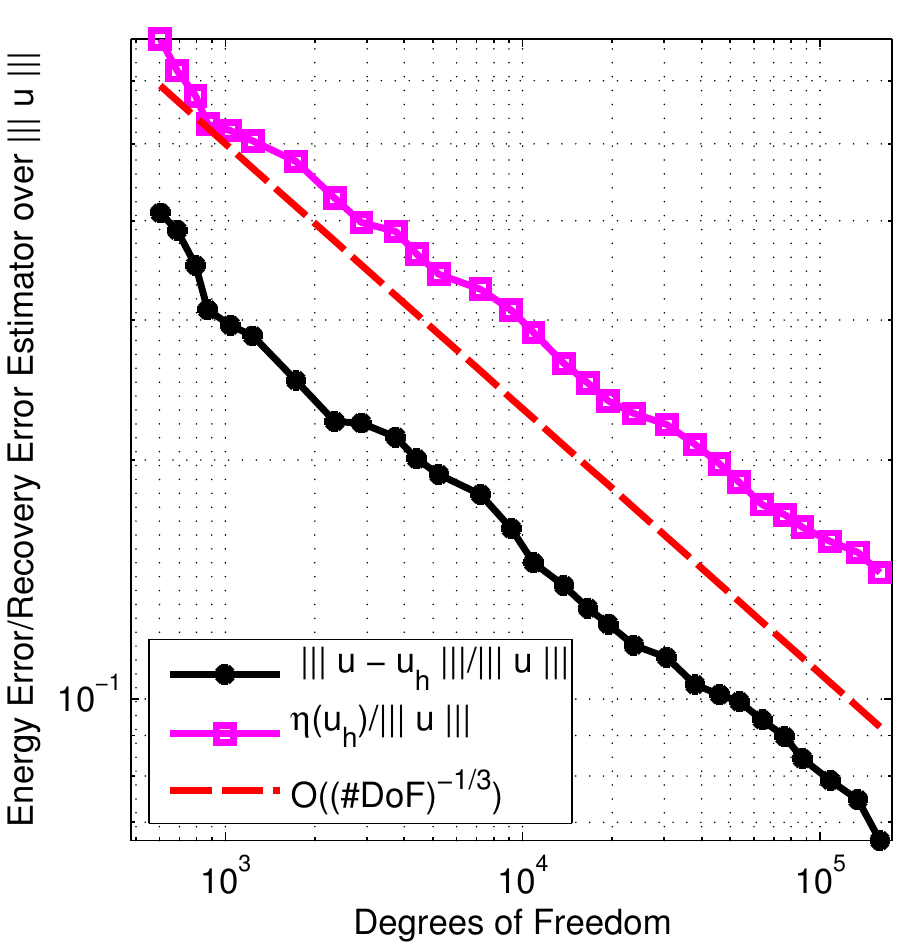}
}
\quad
\subfloat[Convergence of $\eta_{_{\text{New}}}$]
{
\includegraphics[width=0.28\textwidth]{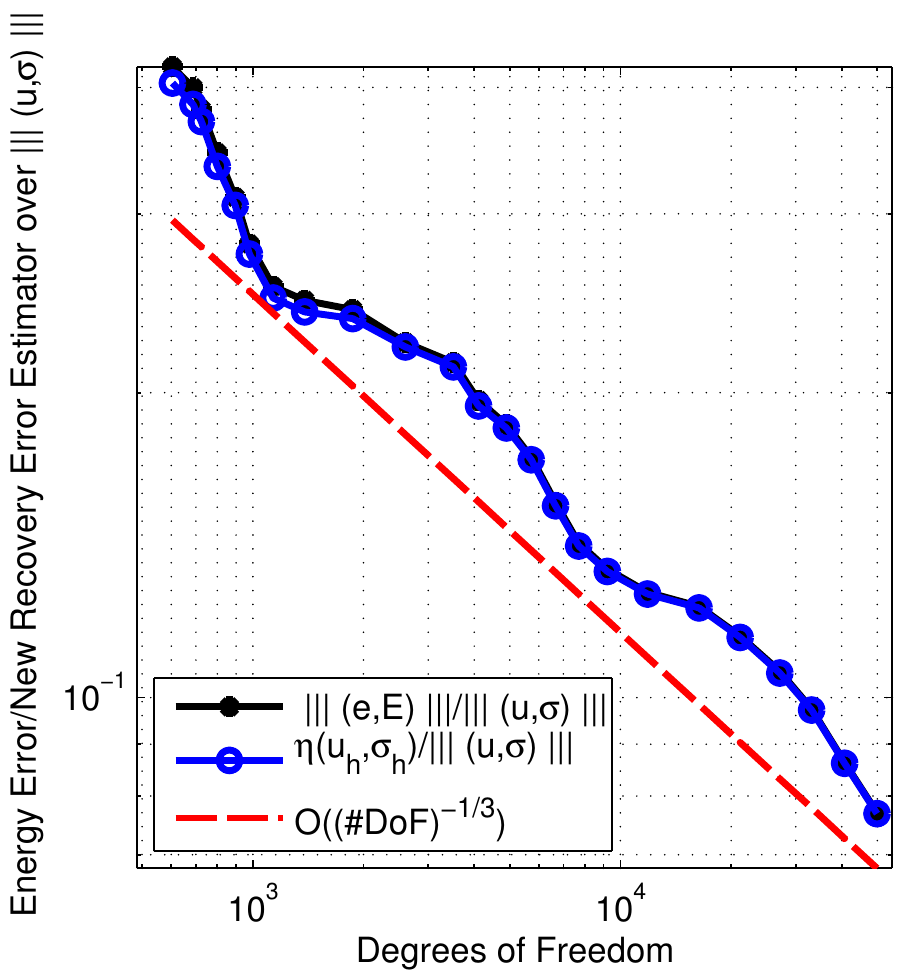}
}
\caption{Convergence Results of Example 2,  $\vf\notin \vHdiv$}
\label{fig:ex3-conv}
\end{figure}

In the second experiment, the $\b$ is chosen to be:
\[
\b= 
\begin{cases} 
1 & \text{ in } \Om_1,
\\
a^{-1} & \text{ in } \Om_0.
\end{cases}
\]
We test the case where $a=10^{-3}$. Similar to Example 1, the necessary tangential 
jump conditions across the interfaces for the primary problem 
are satisfied. Yet the choice of $\b$ implies that the right hand side $\vf 
\notin\vHdiv$. 
Using the residual-based or recovery-based estimator will again lead to unnecessary
over-refinement along the interfaces (see Figure \ref{fig:ex3-mesh}), and the 
order of convergence is sub-optimal than the optimal order for linear 
elements (See Table \ref{table:ex3-1} and Figure \ref{fig:ex3-conv}). 

\begin{figure}[h]
\centering
\subfloat[Adaptively refined mesh based on $\eta_{_{\text{Res},K}}$]
{
\includegraphics[width=0.25\textwidth]{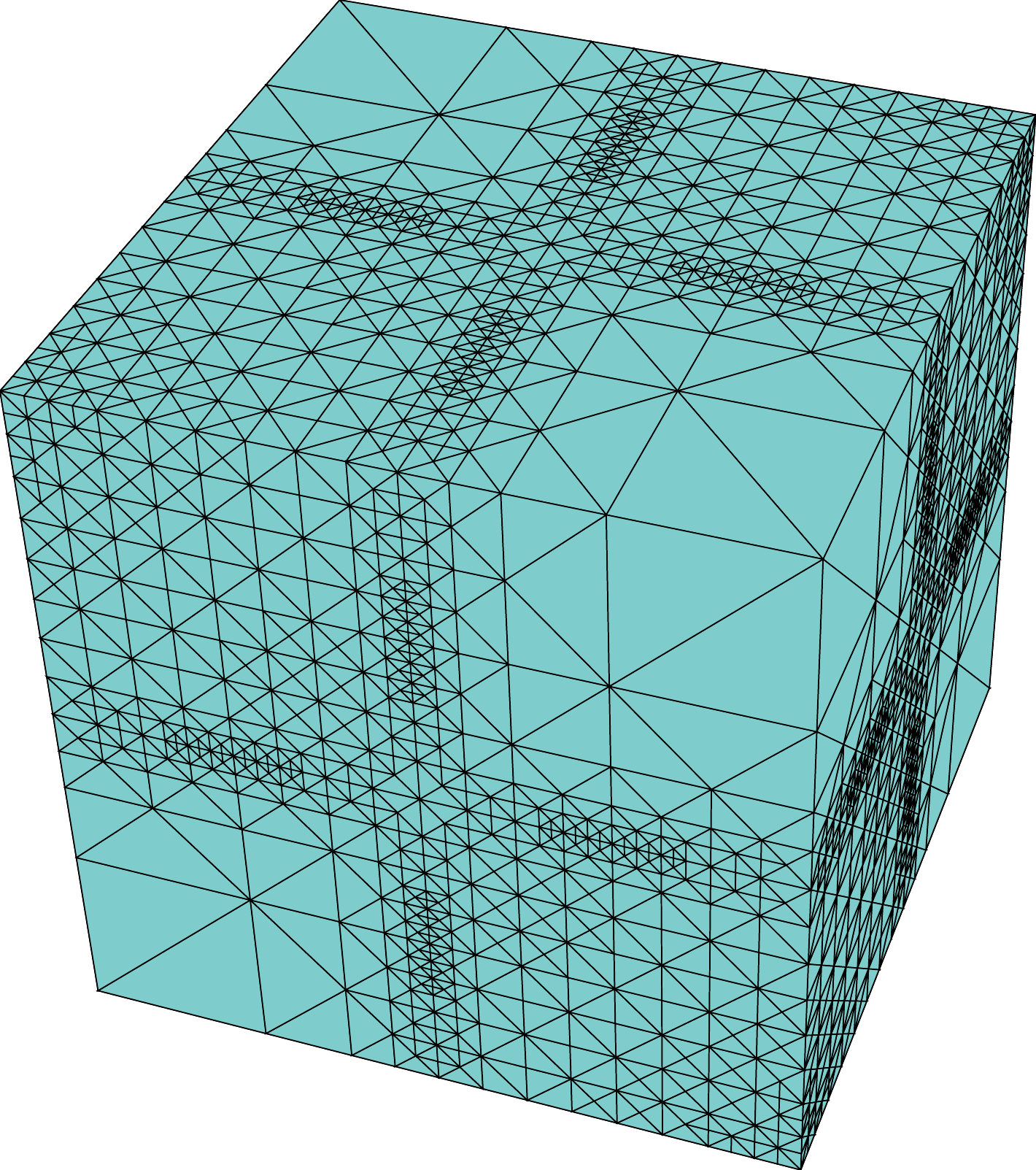}
}
\hskip 0.5cm
\subfloat[Adaptively refined mesh based on $\eta_{_{\text{Rec},K}}$]
{
\includegraphics[width=0.25\textwidth]{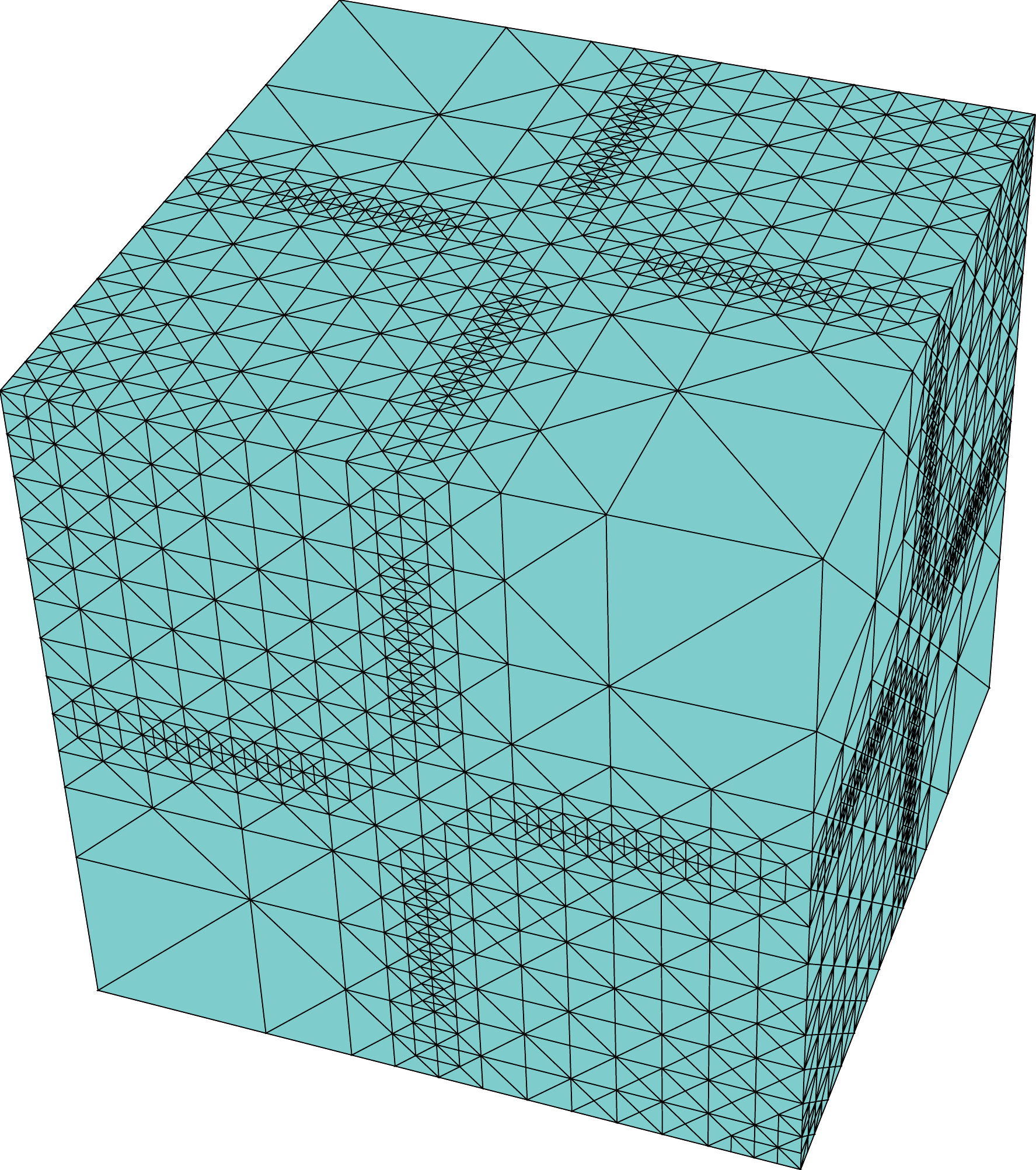}
}
\hskip 0.5cm
\subfloat[Adaptively refined mesh based on $\eta_{_{\text{New},K}}$]
{
\includegraphics[width=0.25\textwidth]{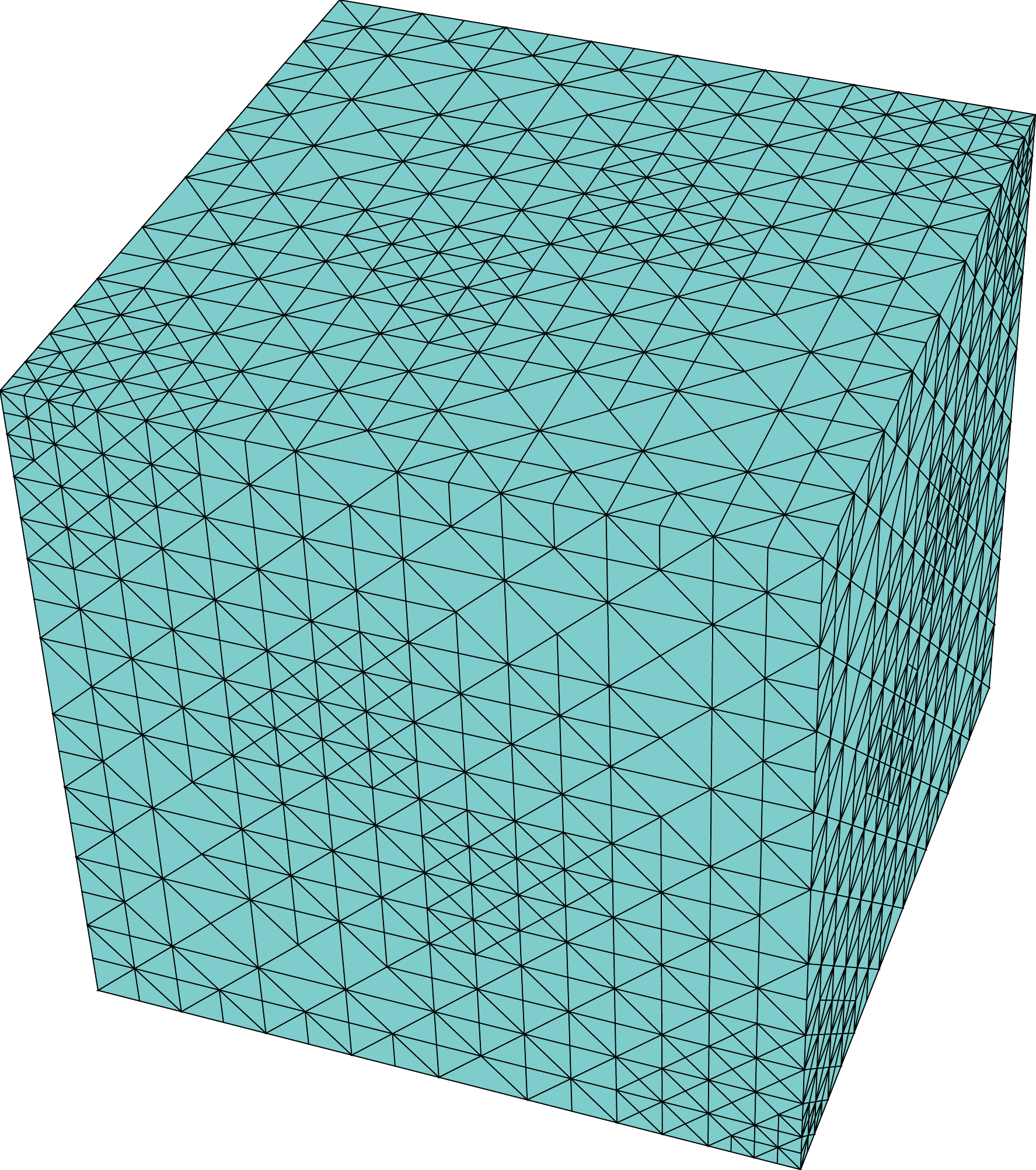}
}
\caption{Mesh Result of Example 2, $\vf\notin \vHdiv$}
\label{fig:ex3-mesh}
\end{figure}

The new estimator in this paper shows convergence in the optimal order no matter 
how we set up the jump of the coefficients. The conclusion of comparison with 
the other two estimators remains almost the same with Example 1. In this 
example, the differences are more drastic: $1/3$ 
the degrees of freedom for the new estimator to get roughly the same level of
approximation with the other two. 


\appendix
\section{Appendix}
\label{appendix}
In this appendix, an a priori estimate for the mixed boundary value 
problem with weights is studied following the arguments and notations mainly from 
\cite{Alonso-Valli,Buffa-Ciarlet}. In our study, it is found that, due to the 
duality pairing on the Neumann boundary and the nature of the trace space of 
$\vhcrl$, a higher regularity is needed for the Neumann boundary data $\vg_{_N}$ 
than those for elliptic mixed boundary value problem. First we define 
the tangential trace operator and tangential component projection operator, and 
their range acting on the $\vHcrlb{B}$. Secondly 
we construct a weighted extension of the Dirichlet boundary data to the interior of 
the domain. Lastly the a priori estimate for the solution of 
problem~\eqref{eq:pb-ef-weak} is established after a trace inequality is set up for 
the piecewise smooth vector field.

\subsection{The tangential trace and tangential component space}

On either Dirichlet or Neumann part of the boundary, the tangential trace operator 
$\g_{\top,B}$ and the tangential component operator $\pi_{\top, B}$ are defined as 
follows:

\begin{equation}
\label{eq:def-trace}
 \g_{\top,B}: \vv\mapsto 
  \left\{\begin{array}{ll}
  \vv\cross\vn  & \mbox{on }\, \G_B,\\[2mm] 
 \bm{0}  & \mbox{on }\,  \p\Om\backslash \ol{\G}_B,
  \end{array}\right.
 \mbox{and}\,\,\,
   \pi_{\top, B}: \vv\mapsto 
   \left\{\begin{array}{ll}
   \vn\cross(\vv\cross\vn) & \mbox{on }\,\G_B, \\[2mm]
   \bm{0} & \mbox{on }\,\p\Om\backslash \ol{\G}_B,
    \end{array}\right.
\end{equation}
respectively, where $\G_B$ is either $\G_{D}$ or $\G_N$.

Define the following spaces as the trace spaces of $\vHcrl$: 
\begin{equation}
\label{eq:space-tr}
\vXbt{\G_B} := \g_{\top, B} \vHcrl\quad
\text{and}\quad  \vXbp{\G_B} := \pi_{\top, B} \vHcrl.
\end{equation}
For the $\vH^1$-regular vector fields, define the trace spaces $\vHhbt{\G_B}$ and 
$\vHhbp{\G_B}$ as:
\begin{equation}
\label{eq:space-tr-hh}
\vHhbt{\G_B} = \pi_{\top,B} \vHo \quad
\text{and}\quad \vHhbp{\G_B} = \g_{\top,B} \vHo.
\end{equation}
It is proved in \cite{Buffa-Ciarlet} that the tangential trace space and the 
tangential component space can be characterized by
\begin{equation}
\label{eq:space-tr-equiv}
\vXbt{\G_B} \subset \vHmhbt{\G_B}
\quad\text{and}\quad 
\vXbp{\G_B} \subset \vHmhbp{\G_B}.
\end{equation}
The $-1/2$ supscripted spaces $\vHmhbt{\G_B}$ and $\vHmhbp{\G_B}$ are defined as the 
dual spaces of $\vHhbt{\G_B}$ and $\vHhbp{\G_B}$.

Now we move on to define the weighted divergence integrable space
\begin{equation}
\begin{aligned}
&\vHdivw{\a} := \{\vv\in \vLt: \divv (\a \vv) \in \Lt \text{ in } \Om \},
\\
\text{and }&\vHdivwz{\a} :=  \{\vv\in \vLt: \divv (\a \vv) =0 \text{ in } \Om \},
\end{aligned}
\end{equation}

and the piecewise regular field space $\vX(\Om,\a,\G_{B})$ as follows:
\begin{equation}
\begin{aligned}
&\vX(\Om,\a,\G_{B}) := \vHcrlbz{B} \cap \vHdivw{\a},
\\
\text{and }& \vX_0(\Om,\a,\G_{B}) := \vHcrlbz{B} \cap \vHdivwz{\a}.
\end{aligned}
\end{equation}
The piecewise $\vH^1$ vector field space is defined as:
\begin{equation}
\label{eq:spaces-piecewise}
\vPH{1} = \{\vv \in \vLt: \vv\at{\Om_j} \in \vH^1(\Om_j), \, j = 1,\ldots,m \}.
\end{equation}

\begin{assumption}[Boundary Requirement]
\label{assumption:bd}
Let the Dirichlet or Neumann boundary $\G_B$ ($B = D$ or $N$) be decomposed into 
simply-connected components: $\ol{\G_B} = \cup_{i} \ol{\G_{B,i}}$. For any 
$\G_{B,i}$, 
there exists a single $j\in \{1,\dots,m\}$, such that 
$\G_{B,i}\subset \p \Om_j\cap \p\Om$.
\end{assumption}


\begin{remark}
Assumption \ref{assumption:bd} is to say, each connected component on 
the Dirichlet or Neumann boundary only serves as the boundary of exactly one 
subdomain. Assumption \ref{assumption:bd} is here solely for the a priori error 
estimate. The robustness of the estimator in Section 5 does not rely on this 
assumption if the boundary data are piecewise polynomials.
\end{remark}

Due to Assumption \ref{assumption:bd}, the tangential trace and tangential component 
of a $\vHo$ vector field is the same space as those of a $\vPH{1}$ vector field on 
$\G_D$ or $\G_N$ respectively. 
With slightly abuse of notation, define 
\begin{equation}
\label{eq:space-tr-hhp}
\vHhbt{\G_B} := \pi_{\top,B} \vPH{1}, \quad\text{and}\quad 
\vHhbp{\G_B} := \g_{\top,B} \vPH{1}.
\end{equation}

Now we define the weighted $1/2$-norm for the value of any 
$\vv\in \vPH{1}\cap \vX(\Om,\b,\G_B)$ on boundary as:
\begin{equation}
\norm{\vv}_{1/2,\b,\mu,\G_B} := \inf_{\vv}
\left\{ \norm{\b^{-1/2}\curlt \vv}^2 + \norm{\mu^{-1/2}\divv(\mu\vv)}^2
+\norm{\mu^{1/2}\vv}^2 \right\}^{1/2}.
\end{equation}
Now thanks to the embedding results from 
\cite{Cai15}, $\norm{\cdot}_{1/2,\b,\mu,\G_B}$ is equivalent to the unweighted 
$\norm{\cdot}_{1/2,\G_B}$ which can be defined as:
\[
\norm{\vv}_{1/2,\G_B} := \inf_{\vv} \left\{ \sum_{j=1}^m \norm{\vv}_{1,\Om_j}^2 + 
\norm{\vv}^2 \right\}^{1/2}.
\]

Naturally, the wegithed $-1/2$-norm of any distribution on the boundary can be 
defined as
\begin{equation}
\label{eq:norm-mhwt}
\norm{\vg}_{-1/2,\mu,\b,\G_{B}} := \sup_{\vv\in \vPH{1}\cap\vX(\Om,\b,\G_{B}) }
\frac{\dualp{\vg}{\vv}_{\G_{B}}}{\norm{\vv}_{1/2,\b,\mu,\G_{B}}}.
\end{equation}

\subsection{Extension of The Dirichlet Boundary Data}
After the preparation, we are ready to construct the extension operator for 
any $\vg_D\in \vXbt{\G_{D}} :=\g_{\top, D} \vHcrl$.

\begin{lemma}
\label{lemma:extension}
For any $\vg_{_D} \in  \vXbt{\G_{D}} $, there exists an extension 
$\vu_{_D}\in \vHcrlb{D}$ such 
that $\g_{\top,D} \vu_{_D} = \vg_{_D}$, and the following estimate holds
\begin{equation}
\enorm{\vu_{_D}}_{\mu,\b} \leq \norm{\vg_{_D}}_{-1/2,\mu,\b,\G_{D}}.
\end{equation}
Moreover, for any $\vv\in \vHcrlbz{D}$, there holds
\[A_{\mu,\beta}(\vu_{_D},\vv) = 0.\]

\end{lemma}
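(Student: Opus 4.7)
The plan is to construct $\vu_{_D}$ as the energy-minimizing extension and then obtain the norm bound by a duality argument that exploits the natural boundary condition satisfied by the auxiliary field $\bsig := \mu^{-1}\curlt\vu_{_D}$.

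First, since $\vg_{_D}\in \vXbt{\G_D} = \g_{\top,D}\vHcrl$, there exists some lifting $\vw\in\vHcrlb{D}$. I would then solve the following homogeneous problem via Lax--Milgram: find $\vz\in\vHcrlbz{D}$ such that $A_{\mu,\beta}(\vz,\vv) = -A_{\mu,\beta}(\vw,\vv)$ for every $\vv\in\vHcrlbz{D}$. Coercivity and continuity of $A_{\mu,\beta}$ on $\vHcrlbz{D}$ yield a unique $\vz$. Setting $\vu_{_D}:=\vw+\vz\in\vHcrlb{D}$ immediately gives $\g_{\top,D}\vu_{_D}=\vg_{_D}$ and the claimed orthogonality $A_{\mu,\beta}(\vu_{_D},\vv)=0$ for all $\vv\in\vHcrlbz{D}$. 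Equivalently, $\vu_{_D}$ is the $A_{\mu,\beta}$-projection onto the affine subspace of extensions, hence is characterized by minimizing $\enorm{\cdot}_{\mu,\beta}$ over that subspace.

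Next, testing the orthogonality against smooth compactly-supported fields shows that $\vu_{_D}$ satisfies $\curlt(\mu^{-1}\curlt\vu_{_D})+\b\vu_{_D}=0$ distributionally in $\Om$. Since $\b\vu_{_D}\in\vL^2$, this promotes $\bsig:=\mu^{-1}\curlt\vu_{_D}$ to $\vHcrl$ with $\curlt\bsig=-\b\vu_{_D}$. Testing against fields with nontrivial tangential trace on $\G_N$ then forces the natural boundary condition $\bsig\cross\vn=\bm{0}$ on $\G_N$, so $\bsig\in\vHcrlbz{N}$.

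For the norm bound, I would compute $\enorm{\vu_{_D}}_{\mu,\b}^2=(\bsig,\curlt\vu_{_D})+(\b\vu_{_D},\vu_{_D})$ and integrate by parts on the curl pairing:
\[
(\bsig,\curlt\vu_{_D}) = (\curlt\bsig,\vu_{_D}) + \int_{\p\Om}(\vu_{_D}\cross\vn)\cdot\bsig\,dS.
\]
The boundary integral over $\G_N$ vanishes by $\bsig\cross\vn=\bm{0}$ there, while on $\G_D$ the tangential trace $\vu_{_D}\cross\vn=\vg_{_D}$ contributes. Combined with $\curlt\bsig=-\b\vu_{_D}$, this collapses the expression to the clean identity
\[
\enorm{\vu_{_D}}_{\mu,\b}^2 \;=\; \dualp{\vg_{_D}}{\bsig}_{\G_D}.
\]
Finally, I would verify that $\bsig$ is an admissible test field for the weighted $-1/2$-norm: $\mu\bsig=\curlt\vu_{_D}$ is divergence-free, so $\divv(\mu\bsig)=0$, and $\curlt\bsig=-\b\vu_{_D}\in\vL^2$. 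A direct computation then gives $\norm{\bsig}_{1/2,\b,\mu,\G_D}^2\leq\norm{\b^{-1/2}\curlt\bsig}^2+\norm{\mu^{1/2}\bsig}^2=\enorm{\vu_{_D}}_{\mu,\b}^2$. Applying the definition of the dual norm to $\dualp{\vg_{_D}}{\bsig}_{\G_D}$ and cancelling one factor of $\enorm{\vu_{_D}}_{\mu,\b}$ yields the asserted estimate.

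The main obstacle is the last step: one must ensure that $\bsig$ genuinely belongs to the test class used in defining $\norm{\vg_{_D}}_{-1/2,\mu,\b,\G_D}$ (in particular the piecewise $\vH^1$ regularity implicit in $\vPH{1}$) and that its weighted $1/2$-norm is controlled by $\enorm{\vu_{_D}}_{\mu,\b}$. The regularity lifting comes from the strong-form identities together with the embedding results for the piecewise weighted spaces cited from \cite{Cai15}; modulo that, the whole argument reduces to the two integration-by-parts identities above.
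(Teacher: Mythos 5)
Your construction of $\vu_{_D}$ runs in the opposite direction from the paper's. You build the energy-minimizing extension directly in $\vHcrlb{D}$ (arbitrary lifting plus a Lax--Milgram correction in $\vHcrlbz{D}$), which immediately gives the trace condition and the orthogonality $A_{\mu,\beta}(\vu_{_D},\vv)=0$; the paper instead first solves the dual problem \eqref{eq:pb-ext} for a potential $\vw\in \vPH{1}\cap\vX_0(\Om,\mu,\G_N)$ and then sets $\vu_{_D}=\b^{-1}\curlt\vw$. The two extensions coincide (by uniqueness of the $A_{\mu,\beta}$-orthogonal extension), and your auxiliary field $\bsig=\mu^{-1}\curlt\vu_{_D}$ is, up to sign, exactly the paper's $\vw$. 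Your identity $\enorm{\vu_{_D}}_{\mu,\b}^2=\dualp{\vg_{_D}}{\bsig}_{\G_D}$ is the mirror image of the paper's $\enorm{\vw}_{\b,\mu}^2=\dualp{\vg_{_D}}{\vw}$.

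The ordering, however, is not cosmetic: it is where your argument has a genuine gap, and you have correctly located it yourself. The dual norm \eqref{eq:norm-mhwt} is a supremum only over test fields in $\vPH{1}\cap\vX(\Om,\cdot,\cdot)$, and the weighted $1/2$-norm in its denominator is an infimum over $\vPH{1}$ extensions. To write $\dualp{\vg_{_D}}{\bsig}_{\G_D}\leq \norm{\vg_{_D}}_{-1/2,\mu,\b,\G_D}\,\norm{\bsig}_{1/2,\b,\mu,\G_D}$ and to use $\bsig$ as a competitor in the infimum, you must first show $\bsig\in\vPH{1}$. What your construction delivers is only $\bsig\in\vHcrl\cap\vHdivwz{\mu}$ with vanishing tangential trace on $\G_N$, and the embedding results you invoke from \cite{Cai15} lift such fields only to piecewise $\vH^{1/2+\d}$ on a polyhedral domain with discontinuous coefficients --- not to piecewise $\vH^{1}$. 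Nor can you rescue the step by density, since $\vPH{1}\cap\vX_0$ is in general a closed proper subspace of $\vX_0$ in the graph norm. The paper sidesteps this entirely by \emph{seeking} $\vw$ in $\vPH{1}\cap\vX_0(\Om,\mu,\G_N)$ from the outset, so that $\vw$ is by construction an admissible field in both the supremum and the infimum, and the estimate $\enorm{\vw}_{\b,\mu}\leq\norm{\vg_{_D}}_{-1/2,\mu,\b,\G_{D}}$ falls out of testing \eqref{eq:pb-ext} with $\vw$ itself. To repair your proof you would either need a regularity theorem for $\bsig$ that is not available at this generality, or you should reorganize the argument as the paper does, defining the dual variable variationally in the piecewise-$\vH^1$ space first and recovering $\vu_{_D}$ from its curl.
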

\begin{proof}
The fact that $\vg_{_D} \in  \vXbt{\G_{D}} \subset \vHmhbt{\G_D}$ implies the 
following problem is well-posed:
\begin{equation}
\label{eq:pb-ext}
\left\{
\begin{aligned}
& \text{Find } \vw\in \vPH{1}\cap \vX_0(\Om,\mu,\G_N) \text{ such that:}
\\[2mm]
& \fA_{\b,\mu}(\vw,\vv) = \dualp{\vg_D}{\vv}, \quad \forall\, 
\vv \in \vPH{1}\cap \vX_0(\Om,\a,\G_N),
\end{aligned}
\right.
\end{equation}
where the bilinear form $\fA_{\b,\mu}(\cdot,\cdot)$ is given as
\[\fA_{\b,\mu}(\vw,\vv) := \binprod{\b^{-1}\curlt\vw}{\curlt\vv}
+ \binprod{\mu^{-1}\divv(\mu\vw)}{\divv(\mu\vv)}+ \binprod{\mu\vw}{\vv}.
\]
On this weighted divergence free subspace $\vPH{1}\cap \vX_0(\Om,\mu,\G_N)$:
\[
\fA_{\b,\mu}(\vw,\vv) = \binprod{\b^{-1}\curlt\vw}{\curlt\vv} 
+ \binprod{\mu\vw}{\vv}.
\]
With slightly abuse of notation, the zero extension of $\vg_{_D}$ to the 
Neumann boundary is denoted as $\vg_{_D}$ itself. 
Now for the trial function space and the test function 
space in problem \eqref{eq:pb-ext} are the same, letting $\vw = \vv$ leads to
\[
\enorm{\vw}_{\b,\mu}^2 = \dualp{\vg_{_D}}{\vw}.
\]
Together with their tangential traces vanish on the Neumann boundary, this implies
\[
\enorm{\vw}_{\b,\mu} = \frac{\dualp{\vg_{_D}}{\vw}}{\enorm{\vw}_{\b,\mu}}
\leq \sup_{\vv}\frac{\dualp{\vg_{_D}}{\vv}}{\enorm{\vv}_{\b,\mu}}
\leq \sup_{\vv}\frac{\dualp{\vg_{_D}}{\vv}_{\G_D}}{\norm{\vv}_{1/2,\b,\mu,\G_{D}}}
= \norm{\vg_{_D}}_{-1/2,\mu,\b,\G_{D}}.
\]
The extension is now letting $\vu_{_D} = \b^{-1}\curlt\vw$. To prove the 
estimate, we first notice that the problem \eqref{eq:pb-ext} is a consistent 
variational formulation for the following PDE:
\begin{equation}
\label{eq:pb-ext-strong}
\left\{
\begin{aligned}
\curlt (\b^{-1}  \curlt \vw) + \mu\, \vw &= \bm{0},  
&\text{ in }\, \Om,\;\;
\\
\divv(\mu\vw) &=0,  &\; \text{ in }\, \Om,\;\;
\\
(\mu^{-1} \curlt \vw) \cross \vn & = \vg_{_D}, 
&\; \text{ on }\, \G_{D}.
\end{aligned} 
\right.
\end{equation}
Therefore, the energy norm of $\vu_{_D}$ is
\[
\begin{aligned}
\enorm{\vu_{_D}}_{\mu,\b}^2 &= 
\norm{\mu^{-1/2}\curlt\vu_{_D}}^2 + \norm{\b^{1/2}\vu_{_D}}^2
\\
& = \norm{\mu^{1/2}\vw}^2 + \norm{\b^{-1/2}\curlt\vw}^2
\\
& = \enorm{\vw}_{\b,\mu}^2 \leq \norm{\vg_{_D}}_{-1/2,\mu,\b,\G_{D}}.
\end{aligned}
\]
For the second equality in the Lemma, it is straightforward to verify that for any 
$\vv\in \vHcrlbz{D}$, with $\vw$ is from the above construction, the following 
identity 
holds
\[
\begin{aligned}
A_{\mu,\beta}(\vu_{_D},\vv) &= 
\binprod{\mu^{-1}\curlt\vu_{_D}}{\curlt\vv} + \binprod{\b\vu_{_D}}{\vv}
\\
&= \binprod{-\vw}{\curlt\vv} + \binprod{\curlt\vw}{\vv} 
\\
&= \dualp{\vn\cross\vw}{\vv} = 0.
\end{aligned}
\]
The last equality follows from the fact that $\vw\cross\vn = \bm{0}$ on 
$\G_{N}$ and $\vv\cross\vn = \bm{0}$ on $\G_{D}$.
\end{proof}

\subsection{A Trace inequality}
In this section we want to establish a trace inequality for the tangential 
component space of $\vHcrlb{N}$. For any $\vv\in \vHcrlbz{D}$, consider the 
tangential component space $\vXbp{\G_N}$ defined in \eqref{eq:space-tr} that 
contains 
all the tangential components of $\vv\in \vHcrlbz{D}$ on the Neumann boundary and 
zero 
on the Dirichlet boundary.

\begin{lemma}[Trace inequality for the tangential component]
\label{lemma:trace}
For $\vv\in \vHcrlbz{D}$, the tangential component of $\vv$ on 
$\G_{N}$ is $\pi_{\top,N} \vv \in \vXbp{\G_N}$ and satisfies the following 
estimate:
\begin{equation}
\label{eq:tr}
\norm{\pi_{\top,N} \vv}_{-1/2,\mu,\b,\G_N} \leq
\, \enorm{\vv}_{\mu,\b}.
\end{equation}
\end{lemma}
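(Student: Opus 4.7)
The plan is to convert the boundary pairing $\dualp{\pi_{\top,N}\vv}{\vw}_{\G_N}$ into a volume expression via Green's identity for $\vhcrl$, apply a weighted Cauchy--Schwarz inequality to extract $\enorm{\vv}_{\mu,\b}$, and then invoke the infimum definition of $\norm{\cdot}_{1/2,\b,\mu,\G_N}$ to absorb the remaining factor.

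First I would unfold the definition of $\norm{\pi_{\top,N}\vv}_{-1/2,\mu,\b,\G_N}$ and fix an arbitrary admissible test field $\vw\in\vPH{1}$; by the same convention used in Lemma~\ref{lemma:extension}, this $\vw$ has vanishing tangential trace on $\G_D$. Since $\vv\in\vHcrlbz{D}$ also satisfies $\vv\cross\vn=\bzero$ on $\G_D$, Green's identity collapses the boundary term to $\G_N$:
\[
\int_{\G_N}(\vv\cross\vn)\cdot\vw\,dS = \int_\Om \curlt\vv\cdot\vw\,dx - \int_\Om \vv\cdot\curlt\vw\,dx,
\]
and the scalar-triple-product identity $(\vv\cross\vn)\cdot\vw=-\pi_{\top,N}\vv\cdot(\vw\cross\vn)$ on $\G_N$ identifies the left-hand side with $\dualp{\pi_{\top,N}\vv}{\vw}_{\G_N}$ up to a sign.

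Next I would distribute the weights $\mu^{\pm 1/2}$ across the curl--mass term and $\b^{\pm 1/2}$ across the mass--curl term, apply Cauchy--Schwarz to each integral, and combine via $ac+bd\le\sqrt{a^2+b^2}\,\sqrt{c^2+d^2}$ to obtain
\[
|\dualp{\pi_{\top,N}\vv}{\vw}_{\G_N}|\leq \enorm{\vv}_{\mu,\b}\bigl(\norm{\mu^{1/2}\vw}^2+\norm{\b^{-1/2}\curlt\vw}^2\bigr)^{1/2}.
\]
Because the pairing depends only on $\vw|_{\G_N}$, this estimate holds for every extension of that trace; adding the nonnegative term $\norm{\mu^{-1/2}\divv(\mu\vw)}^2$ only enlarges the second factor, and taking the infimum over extensions then yields $\enorm{\vv}_{\mu,\b}\,\norm{\vw}_{1/2,\b,\mu,\G_N}$. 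Dividing and supremizing over $\vw$ finishes the proof.

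The main subtlety is the identification of the Green's-identity boundary term with the duality pairing $\dualp{\pi_{\top,N}\vv}{\vw}_{\G_N}$: the surface integral $\int_{\G_N}(\vv\cross\vn)\cdot\vw\,dS$ only sees tangential components, and translating it into a pairing with $\pi_{\top,N}\vv$ requires the cross-product convention relating tangential trace and tangential component, together with compatibility of the test space with the vanishing trace on $\G_D$. Once this is fixed, the weighted Cauchy--Schwarz step and the infimum argument for the trace norm are routine.
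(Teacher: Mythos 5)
Your proposal is correct and follows essentially the same route as the paper: the integration by parts formula for $\vhcrl$ fields converts the boundary pairing into the two volume terms, a weighted Cauchy--Schwarz argument produces the factor $\enorm{\vv}_{\mu,\b}$ times the weighted $\vH^1$-type energy of the test field, and the infimum over extensions together with the supremum in the definition of $\norm{\cdot}_{-1/2,\mu,\b,\G_N}$ finishes the argument. The only cosmetic difference is that the paper fixes a (near-)optimal extension $\wt{\bxi}$ of the boundary datum at the outset rather than taking the infimum at the end, which is equivalent.
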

\begin{proof}
First we notice that $\vXbp{\G_N} \subset \vHmhbp{\G_N}$ which is the dual space of 
$\g_{\top,B} \vPH{1}$. For any $\bxi\in \vHhbp{\G_N} $, there exists $\wt{\bxi} \in 
\vPH{1}$ such that $\wt{\bxi}\cross \vn\at{\G_{N}} =\bxi$ and
\[
\left\{\norm{\b^{-1/2}\curlt \wt{\bxi}}^2 + 
\norm{\mu^{-1/2}\divv(\mu\wt{\bxi})}^2
+\norm{\mu^{1/2}\wt{\bxi}}^2 \right\}^{1/2} \leq \norm{\bxi}_{1/2,\b,\mu,\G_N}.
\]
By the integration by parts formula from \cite{Buffa-Ciarlet} and 
Cauchy-Schwarz inequality, we have
\[
\begin{aligned}
\dualp{\bxi}{\vv}_{\G_{N}} &= \dualp{\bxi}{\pi_{\top,N} \vv}_{\p\Om}
\\
&= \binprod{\curlt \vv}{\wt{\bxi}} - \binprod{\curlt \wt{\bxi}}{\vv}
\\
&= \norm{\mu^{-1/2}\curlt \vv} \norm{\mu^{1/2}\wt{\bxi}}
- \norm{\b^{-1/2}\curlt \wt{\bxi}} \norm{\b^{1/2}\vv}
\\
&\leq \enorm{\vv}_{\mu,\b}\,\enorm{\wt{\bxi}}_{\b,\mu}
\leq \enorm{\vv}_{\mu,\b}\,\norm{\bxi}_{1/2,\b,\mu,\G_N}.
\end{aligned}
\]
Hence by definition \eqref{eq:norm-mhwt} the Lemma follows.
\end{proof}

\subsection{An A Priori Estimate for the \texorpdfstring{$\vhcrl$}{H(curl)} 
Mixed Boundary Value Problem}

\begin{theorem}
Assume that $\vf\in \vL^2(\Omega)$, $\vg_{_D}\in \vXbt{\G_{D}}$, and 
$\vg_{_N}\in \vHhbp{\G_{N}}$. Then the weak formulation of \eqref{eq:pb-ef} 
has a unique solution $\vu \in \vH_D(\curl;\Omega)$ satisfying the following a 
priori estimate
\begin{equation}
\enorm{\vu}_{\mu,\beta} 
\leq \|{\b^{-1/2}\vf}\| 
+\norm{\vg_{_D}}_{-1/2,\mu,\b,\G_{D}} 
+\norm{\vg_{_N}}_{1/2,\b,\mu,\G_N}.
\end{equation}
\end{theorem}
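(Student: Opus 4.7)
The plan is to split the solution as $\vu = \vu_{_D} + \vu_0$, where $\vu_{_D}\in \vHcrlb{D}$ is the Dirichlet extension provided by Lemma \ref{lemma:extension} and $\vu_0 \in \vHcrlbz{D}$ carries the remaining variational information. Since Lemma \ref{lemma:extension} guarantees $A_{\mu,\beta}(\vu_{_D},\vv)=0$ for every $\vv\in\vHcrlbz{D}$, subtracting from \eqref{eq:pb-ef-weak} gives a homogeneous-Dirichlet variational problem for $\vu_0$:
\[
A_{\mu,\beta}(\vu_0,\vv) \;=\; f_{_N}(\vv)\;=\;(\vf,\vv)+\dualp{\vg_{_N}}{\vv}_{\G_N}, \qquad\forall\,\vv\in\vHcrlbz{D}.
\]

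Next I would establish well-posedness of this reduced problem by the Lax--Milgram theorem: continuity and coercivity of $A_{\mu,\beta}$ on $\vHcrlbz{D}$ with respect to $\enorm{\cdot}_{\mu,\beta}$ are immediate from the lower bounds on $\mu^{-1}$ and $\b$, while continuity of the right-hand side on $\vHcrlbz{D}$ is exactly what the trace lemma (Lemma \ref{lemma:trace}) provides for the Neumann duality term. Uniqueness of $\vu$ then follows from uniqueness of $\vu_0$ together with the fixed extension $\vu_{_D}$.

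For the a priori bound I would test with $\vv=\vu_0$, obtaining
\[
\enorm{\vu_0}_{\mu,\beta}^2 \;=\; (\vf,\vu_0) + \dualp{\vg_{_N}}{\vu_0}_{\G_N}.
\]
The first term is controlled by Cauchy--Schwarz after factoring $\b^{\pm1/2}$: $(\vf,\vu_0)\le \|\b^{-1/2}\vf\|\cdot\|\b^{1/2}\vu_0\|\le \|\b^{-1/2}\vf\|\,\enorm{\vu_0}_{\mu,\beta}$. For the boundary term, the definition \eqref{eq:norm-mhwt} of the weighted negative-order norm together with Lemma \ref{lemma:trace} gives
\[
\dualp{\vg_{_N}}{\vu_0}_{\G_N} \;\le\; \norm{\vg_{_N}}_{1/2,\b,\mu,\G_N}\,\norm{\pi_{\top,N}\vu_0}_{-1/2,\mu,\b,\G_N}\;\le\;\norm{\vg_{_N}}_{1/2,\b,\mu,\G_N}\,\enorm{\vu_0}_{\mu,\beta}.
\]
Dividing through by $\enorm{\vu_0}_{\mu,\beta}$ yields the bound for $\vu_0$; a triangle inequality $\enorm{\vu}_{\mu,\beta}\le\enorm{\vu_0}_{\mu,\beta}+\enorm{\vu_{_D}}_{\mu,\beta}$ combined with the extension estimate from Lemma \ref{lemma:extension} then produces the three-term bound claimed.

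The main obstacle is really bookkeeping rather than any deep step: the weighted trace/extension norms are tailored precisely so that the inequalities above close with constant $1$, and the chief subtlety is confirming that the duality pairing on $\G_N$ is well defined for test functions $\vu_0\in\vHcrlbz{D}$ whose tangential component may not live in $\vHhbp{\G_N}$. This is resolved by the trace inequality in Lemma \ref{lemma:trace}, whose statement is exactly what is needed because $\vu_0$ has vanishing tangential trace on $\G_D$ under Assumption \ref{assumption:bd}; hence the pairing extends by duality between $\vHhbp{\G_N}$ and $\vXbp{\G_N}\subset\vHmhbp{\G_N}$, making each step of the chain legitimate.
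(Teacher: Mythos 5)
Your proposal is correct and follows essentially the same route as the paper's own proof: the same splitting $\vu=\vu_{_D}+\vu_0$ via the extension of Lemma \ref{lemma:extension}, the same use of the trace inequality of Lemma \ref{lemma:trace} to control the Neumann duality term, and the same test with $\vv=\vu_0$ followed by division and the triangle inequality. No gaps; the bookkeeping closes with constant $1$ exactly as you describe.
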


\begin{proof}
Let $\vu_{_D}\in \vHcrlb{D}$ be the extension of the $\vg_{_D}$ to the domain 
$\Omega$ 
from Lemma \ref{lemma:extension} such that
\begin{equation}\label{extension}
\begin{gathered}
\vu_{_D}\cross\vn\at{\G_{D}}= \vg_{_D}, 
\quad \vu_{_D}\cross\vn\at{\G_{N}} = \bm{0},
\\
A_{\mu,\beta}(\vu_{_D},\vv) = 0, \quad\mbox{and}\quad
\enorm{\vu_{_D}}_{\mu,\b} \leq \norm{\vg_{_D}}_{-1/2,\mu,\b,\G_{D}}.
\end{gathered}
\end{equation}

Now let $\vu=\vu_0+\vu_{_D}$, 
then $\vu_0\in \vHcrlbz{D}$ satisfies
\begin{equation}
\label{eq:pb-ef-weak-0}
A_{\mu,\beta}(\vu_0,\vv)  = f_{_N}(\vv),
 \quad \forall\;  \vv \in \vHcrlbz{D}.
\end{equation}
The triangle inequality and \eqref{extension} give
\[
\enorm{\vu}_{\mu,\beta} 
\leq \enorm{\vu_0}_{\mu,\beta} + \enorm{\vu_{_D}}_{\mu,\beta}
\leq \enorm{\vu_0}_{\mu,\beta} + \norm{\vg_{_D}}_{-1/2,\mu,\b,\G_{D}}.
\]

Now, to show the validity of the theorem, 
it suffices to prove that problem \eqref{eq:pb-ef-weak-0} has a unique 
solution 
$\vu_0\in \vHcrlbz{D}$ satisfying the following a priori estimate
\begin{equation}\label{ap-estimate-ef-1}
\enorm{\vu_0}_{\mu,\beta} \leq 
\norm{\b^{-1/2}\vf}+ \norm{\vg_{_N}}_{1/2,\b,\mu,\G_N} .
\end{equation}
To this end, for any $\vv\in \vHcrlbz{D}$, we have from trace Lemma 
\ref{lemma:trace} 
\[
\norm{\pi_{\top,N}\vv}_{-1/2,\mu,\b,\G_N} \leq \enorm{\vv}_{\mu,\b},
\]
which, together with the Cauchy-Schwarz inequality, implies 
\begin{eqnarray*}
|f_{_N}(\vv)|
&\leq & \norm{\b^{-1/2}\vf}\,\norm{\b^{1/2}\vv} + 
\norm{\vg_{_N}}_{1/2,\b,\mu,\G_N}  \norm{\pi_{\top,N}\vv}_{-1/2,\mu,\b,\G_N}
\\[2mm]
&\leq & \left(\norm{\b^{-1/2}\vf}+ \norm{\vg_{_N}}_{1/2,\b,\mu,\G_N}\right)
\enorm{\vv}_{\mu,\beta} .
\end{eqnarray*}
By the Lax-Milgram lemma, \eqref{eq:pb-ef-weak-0} has a unique solution 
$\vu_0\in \vHcrlbz{D}$. Taking $\vv=\vu_0$ in \eqref{eq:pb-ef-weak-0}, we have
\[
\enorm{\vu_0}_{\mu,\beta}^2
= f_{_N}(\vu_0)\leq 
\left(\norm{\b^{-1/2}\vf} 
+ \norm{\vg_{_N}}_{1/2,\b,\mu,\G_N}\right)
\enorm{\vu_0}_{\mu,\beta}  .
\]
Dividing $\enorm{\vu_0}_{\mu,\beta}$ on the both sides of the above 
inequality yields 
\eqref{ap-estimate-ef-1}.
This completes the proof of the theorem.
\end{proof}



\begin{thebibliography}{}

\bibitem{Alonso-Valli}
{\sc A. Alonso and A. Valli},{{ \em Some remarks on the characterization of the 
space of tangential traces of H (rot; $\Omega$) and the construction of an extension 
operator}}, Manuscripta Mathematica, 89-1 (1996), 159--178.

\bibitem{Buffa-Ciarlet}
{\sc A. Buffa and P. Ciarlet, Jr}, {\em On traces for functional spaces related to 
{M}axwell's equations Part {I}: An integration by parts formula in {L}ipschitz 
polyhedra}, Math. Method. Appl. Sci., 24-1 (2001), 9--30.

\bibitem{Hiptmair2000}
{\sc R. Beck, R. Hiptmair, R. W. Hoppe, and B. Wohlmuth}, {\em Residual 
based a posteriori error estimators for eddy current
computation}, Math. Model. Numer. Anal., 34 (2000), 159--182.

\bibitem{Braess06}
{\sc D. Braess and J. Sch\"{o}berl}, {\em Equilibrated residual error 
estimator for edge elements}, Math. Comp., 77 (2008), 
651--672.

\bibitem{Cai12}
{\sc Z.~Cai and S.~Zhang}, {\em Robust equilibrated residual error estimator
  for diffusion problems: Conforming elements}, SIAM J. Numer. Anal., 50
  (2012), pp.~151--170.

\bibitem{Cai15}
{\sc Z. Cai and S. Cao}, {\em A recovery-based a posteriori error estimator 
for $\vhcrl$ interface problems}, Comput. Methods Appl. Mech. Engrg., 
296 (2015), 169-195.


\bibitem{Cai09}
{\sc Z. Cai and S. Zhang}, {\em Recovery-based error estimators for interface 
problems: conforming linear elements}, SIAM J. Numer. Anal., 47-3 (2009), 
2132--2156.

\bibitem{Cai10}
{\sc Z. Cai and S. Zhang}, {\em Recovery-based error estimators for interface 
problems: 
Mixed and nonconforming finite elements}, SIAM J. Numer. Anal., 48 (2010), 
30--52.


\bibitem{Chen.L2008c}
{\sc L. Chen}, {\em {$i$FEM}: an innovative finite element methods package in 
MATLAB}, Technical Report, University of California at Irvine, (2009).


\bibitem{Zou-1}
{\sc J. Chen, Y. Xu, and J. Zou}, {\em An adaptive edge element method and its 
convergence for a saddle-point problem from magnetostatics}, 
Numer. Methods PDEs, 28 (2012), 1643--1666.

\bibitem{Zou-2}
{\sc J. Chen, Y. Xu, and J. Zou}, {\em Convergence analysis of an adaptive edge 
element method for Maxwell's equations}, 
Appl. Numer. Math., 59 (2009), 2950--2969.

\bibitem{Chen10}
{\sc L. Zhong, S. Shu, L. Chen, and J. Xu}, {\em Convergence of adaptive 
edge finite element methods for ${H}(\mathbf{curl})$-elliptic problems}, 
Numerical Linear Algebra with Applications, 17 (2010), 415--432.

\bibitem{Cockburn05}
{\sc B. Cockburn and J. Gopalakrishnan}, {\em Incompressible finite elements via 
hybridization. Part II: The Stokes system in three space dimensions}, 
SIAM J. Numer. Anal., 43-4 (2005), 1651--1672.

\bibitem{Demkowicz2009}
{\sc L. Demkowicz, J. Gopalakrishnan, and J. Sch{\"o}berl}, {\em Polynomial 
extension operators. Part II}, SIAM J. Numer. Anal., 47-5 (2009), 3293--3324.

\bibitem{Ekeland-Temam} {\sc I. Ekeland and R. Temam}, {\em Convex Analysis and 
Variational Problems}, North-Holland, Amsterdam,1976.

\bibitem{Ern15}
  {A. Ern and J.-L. Guermond},
  {\em Mollification in strongly Lipschitz domains with application to continuous 
  and discrete De Rham complex},
  arXiv:1509.01325 [math.NA].
  
\bibitem{Falgout02}
{\sc R. Falgout and U. Yang}, {\em hypre: A library of high performance 
preconditioners}, 
Computational Science-ICCS 2002, (2002), 632--641.

\bibitem{Harutyunyan08}
{\sc F. Izs\'{a}k, D. Harutyunyan, and J. J. W. van der Vegt}, {\em Implicit 
a posteriori error estimates for the {M}axwell equations}, Math. Comp., 
77 (2008), 1355--1386.

\bibitem{Hiptmair98}
{\sc R. Hiptmair}, {\em Multigrid method for Maxwell's equations}, SIAM J. Numer. 
Anal., 36-1 (1998), 204--225.

\bibitem{Hiptmair07}
{\sc R. Hiptmair and J. Xu}, {\em Nodal auxiliary space preconditioning in H(curl) 
and H(div) spaces}, SIAM J. Numer. Anal., 45-6 (2007), 
2483--2509.

\bibitem{Knyazev07}
{\sc A. Knyazev, M. Argentati, I. Lashuk, E. Ovtchinnikov}, {\em Block locally 
optimal preconditioned eigenvalue xolvers (BLOPEX) in HYPRE and PETSc}, SIAM Journal 
on Scientific Computing, 29-5 (2007), 2224--2239.

\bibitem{Kolev06}
{\sc T. Kolev and P. Vassilevski}, {\em Some experience with a $H^1$-based auxiliary 
space AMG for $H(curl)$ problems}, 
Lawrence Livermore Nat. Lab., Livermore, CA, Rep. UCRL-TR-221841, 2006.


\bibitem{mfem-library}
{\em MFEM: Modular finite element methods}, mfem.org.

\bibitem{Monk} {\sc P. Monk}, {\em Finite Element Methods for Maxwell's 
Equations}, Oxford University Press,  2003.

\bibitem{Nedelec80}
{\sc J.-C. N\'{e}d\'{e}lec}, {\em Mixed finite elements in $\mathbb{R}^3$}, 
Numer. Math., 35 (1980), 315--341.

\bibitem{Neittaanmaki2010}
{\sc P. Neittaanm{\"a}ki and S. Repin}, {\em Guaranteed error bounds for conforming 
approximations of a Maxwell type problem}, Applied and Numerical Partial 
Differential Equations, (2010), pp.~199--211.

\bibitem{Nicaise03}
{\sc E. Creus\'{e} and S. Nicaise}, {\em A posteriori error estimation for the 
heterogeneous Maxwell equations on isotropic and anisotropic meshes}, Calcolo, 
40-4 (2003), 249--271.

\bibitem{Nicaise05}
{\sc S. Nicaise}, {\em On Zienkiewicz-Zhu error estimators for Maxwell's 
equations}, C. R. Acad. Sci., Paris, S\'{e}r. I, 
340 (2005), 697--702.

\bibitem{Nicaise07}
{\sc S. Cochez-Dhondt and S. Nicaise}, {\em Robust a posteriori error 
estimation for the Maxwell equations}, Comput. Methods Appl. Mech. Engrg., 
196 (2007), 2583--2595.

\bibitem{Oden89}
{\sc J.T. Oden and L. Demkowicz and W. Rachowicz and T.A. Westermann}, {\em Toward a 
universal hp adaptive finite element strategy, 
Part 2. A posteriori error estimation}, Comput. Methods Appl. Mech. Engrg., 
77-1 (1989), 113--180.

\bibitem{Repin2007}
{\sc S. Repin}, {\em Functional a posteriori estimates for Maxwell's equation}, 
Journal of Mathematical Sciences, 142-1 (2007), 1821--1827.

\bibitem{Petzoldt02}
{\sc M. Petzoldt}, {\em A posteriori error estimators for elliptic equations 
with discontinuous coefficients}, Advances in Computational Mathematics, 
16(2002), pp.~47--75.

\bibitem{Prager1947}
{\sc W. Prager and J.L. Synge}, {\em Approximations in elasticity based on the 
concept of function space}, Quart. Appl. Math, 
5-3(1947), pp.~241--269.

\bibitem{Schoberl07}
{\sc J. Sch\"{o}berl}, {\em A posteriori error estimates for 
Maxwell equations}, Math. Comp., 77 (2008), 633--649.

\end{thebibliography}
\end{document}